\newtheorem*{definition}{Definition}
\newtheorem{claim}{Claim}
\newtheorem{theorem}{Theorem}
\newtheorem{lemma}[theorem]{Lemma}
\newtheorem{fact}{Fact}
\newtheorem{conjecture}[theorem]{Conjecture}
\newtheorem{proposition}[theorem]{Proposition}
\newtheorem{corollary}[theorem]{Corollary}
\DeclareMathOperator{\sgn}{sign}
\DeclareMathOperator{\ra}{\rangle}
\DeclareMathOperator{\la}{\langle}
\DeclareMathOperator{\N}{\mathbb{N}}
\begin{document}
\title{An isoperimetric inequality for conjugation-invariant sets in the symmetric group}
\author{Neta Atzmon\footnote{Faculty of Mathematics and Computer Science, Weizmann Institute of Science, Israel.},\, David Ellis\footnote{School of Mathematical Sciences, Queen Mary, University of London, UK. Research supported in part by a Feinberg Visiting Fellowship from the Weizmann Institute of Science.}\, and Dmitry Kogan\footnote{Faculty of Mathematics and Computer Science, Weizmann Institute of Science, Israel.}}
\maketitle
\begin{abstract}
We prove an isoperimetric inequality for conjugation-invariant sets of size $k$ in $S_n$, showing that these necessarily have edge-boundary considerably larger than some other sets of size $k$ (provided $k$ is small). Specifically, let $T_n$ denote the Cayley graph on $S_n$ generated by the set of all transpositions. We show that if $A \subset S_n$ is a conjugation-invariant set with $|A| = pn! \leq n!/2$, then the edge-boundary of $A$ in $T_n$ has size at least
$$c \cdot \frac {\log_2 \left( \tfrac 1{p} \right)
    }{\log_2 \log_2 \left( \tfrac 2{p} \right)}\cdot n \cdot |A|,$$
    where $c$ is an absolute constant. (This is sharp up to an absolute constant factor, when $p = \Theta(1/s!)$ for any $s \in \{1,2,\ldots,n\}$.) It follows that if $p = n^{-\Theta(1)}$, then the edge-boundary of a conjugation-invariant set of measure $p$ is necessarily a factor of $\Omega(\log n / \log \log n)$ larger than the minimum edge-boundary over all sets of measure $p$.
\end{abstract}

\section{Introduction}
Isoperimetric problems are classical objects of study in mathematics. In general, they ask for the smallest possible `boundary' of a set of a given `size'. For example, of all shapes in the plane with area 1, which has the smallest perimeter? The ancient Greeks were sure that the answer is a circle, but it was not until the 19th century (with the work of Weierstrass) that this was proved rigorously.

In the last fifty years, `discrete' isoperimetric problems have been extensively studied. These deal with the boundaries of sets of vertices in graphs. Here, there are two different notions of boundary. If \(G=(V,E)\) is a graph, and \(A \subset V\), the {\em vertex-boundary of \(A\) in \(G\)} is the set of all vertices in \(V \setminus A\) which have a neighbour in \(A\). (This is sometimes denoted by $b_{G}(A)$.) Similarly, the {\em edge-boundary of \(A\) in \(G\)} is the set of all edges of \(G\) between \(A\) and \(V \setminus A\). (This is often denoted by $\partial_G(A)$.) The {\em vertex-isoperimetric problem for \(G\)} asks for the minimum possible size of the vertex-boundary of a \(k\)-element subset of \(V\), for each \(k \in \mathbb{N}\). Similarly, the {\em edge-isoperimetric problem for \(G\)} asks for the minimum possible size of the edge-boundary of a \(k\)-element subset of \(V\), for each \(k \in \mathbb{N}\).

A well-known example arises from taking the graph \(G\) to be the \(n\)-dimensional hypercube \(Q_n\), the graph with vertex-set \(\{0,1\}^n\), where \(x\) and \(y\) are joined by an edge if and only if they differ in exactly one coordinate. It turns out that for any $k \in \{1,2,\ldots,2^{n}\}$, the edge-boundary of a \(k\)-element subset of $\{0,1\}^n$ is minimized by taking the first \(k\) elements of the binary ordering on \(\{0,1\}^n\). (This was proved by Harper \cite{harper}, Lindsey \cite{lindsey}, Bernstein \cite{bernstein}, and Hart \cite{hart}.) It follows that if $A \subset \{0,1\}^n$ with $|A|=2^{n-t}$, where $t \in \{1,2,\ldots,n\}$ then $|\partial_{Q_n}(A)| \geq t2^{n-t}$. Equality holds if and only if $A$ is a subcube of codimension $t$.

The reader is referred to \cite{leader} for a survey of results and open problems in the field of discrete isoperimetric inequalities.

It is natural to ask what happens to the minimum size of the edge-boundary if one imposes some kind of symmetry constraint on the set $A$. For example, we say that a set $A \subset \{0,1\}^n$ is {\em transitive-symmetric} if there exists a transitive subgroup $H \leq S_n$ such that $\sigma(A) = A$ for all $\sigma \in H$. (Here, $\sigma(A) := \{\sigma(x):x \in A\}$, where $\sigma(x)$ is defined by $(\sigma(x))_i = x_{\sigma^{-1}(i)}$ for each $i \in [n]$.) In other words, $A$ is transitive-symmetric if there exists a group which acts transitively on the coordinates and leaves $A$ invariant. It turns out that a transitive-symmetric set in $\{0,1\}^n$ must have considerably larger edge-boundary than some other sets of the same size. This follows from the celebrated KKL theorem on influences. Recall that if $A \subset \{0,1\}^n$, and $i \in [n]$, the {\em influence} $\textrm{Inf}_i(A)$ of the $i$th coordinate on $A$ is defined by
$$\textrm{Inf}_i(A) = \frac{|\{x \in \{0,1\}^n:\ \textrm{exactly one of }x\textrm{ and }x^i \textrm{ is in }A\}|}{2^{n}},$$
where $x^i$ denotes $x$ with the $i$th coordinate flipped. Equivalently, if $E_i(Q_n)$ denotes the set of all $2^{n-1}$ direction-$i$ edges of $Q_n$ (meaning, edges of the form $\{x,x^i\}$), then
$$\textrm{Inf}_i(A) = \frac{|\partial_{Q_n}(A) \cap E_i(Q_n)|}{|E_i(Q_n)|} = \frac{|\partial_{Q_n}(A) \cap E_i(Q_n)|}{2^{n-1}}.$$
Note that
$$|\partial_{Q_n}(A)| = 2^{n-1}\sum_{i=1}^{n} \textrm{Inf}_i(A).$$

Kahn, Kalai and Linial \cite{kkl} proved the following.
\begin{theorem}[Kahn, Kalai, Linial]
\label{thm:kkl}
Let $A \subset \{0,1\}^n$ with $|A| = p2^n$. Then provided $n$ is larger than an absolute constant, there exists a coordinate $i \in [n]$ such that
$$\textrm{Inf}_i(A) \geq p(1-p) \frac{\ln n}{n}.$$
\end{theorem}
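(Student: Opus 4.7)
The plan is to follow the classical Kahn--Kalai--Linial approach, using discrete Fourier analysis on the cube together with the Bonami--Beckner hypercontractive inequality. I would pass to the $\pm 1$-valued function $f := 1 - 2\mathbf{1}_A : \{-1,1\}^n \to \{-1,1\}$, whose Walsh expansion $f = \sum_S \hat{f}(S)\chi_S$ (with $\chi_S(x) = \prod_{i \in S} x_i$) gives $\Var(f) = 4p(1-p)$, $\textrm{Inf}_i(A) = \sum_{S \ni i} \hat{f}(S)^2$, and total influence $I(f) := \sum_i \textrm{Inf}_i(A) = \sum_S |S|\hat{f}(S)^2$. The main analytic object will be the discrete derivative $\partial_i f$, which takes values in $\{-1,0,1\}$, has Fourier expansion $\sum_{S \ni i} \hat{f}(S)\chi_{S \setminus \{i\}}$, and satisfies $\|\partial_i f\|_q^q = \textrm{Inf}_i(A)$ for every $q \geq 1$.

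The crucial step is applying the Bonami--Beckner hypercontractive inequality $\|T_\rho g\|_2 \leq \|g\|_{1+\rho^2}$ to $g = \partial_i f$ with $\rho = 1/\sqrt{3}$, yielding
\[
\sum_{S \ni i} 3^{-(|S|-1)} \hat{f}(S)^2 \;\leq\; \textrm{Inf}_i(A)^{3/2}.
\]
Truncating at a level $t$ to be chosen and summing over $i$ with $M := \max_i \textrm{Inf}_i(A)$, this gives
\[
\sum_{1 \leq |S| \leq t} |S|\hat{f}(S)^2 \;\leq\; 3^{t-1}\sum_i \textrm{Inf}_i(A)^{3/2} \;\leq\; 3^{t-1} M^{1/2} I(f).
\]

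To conclude, I would combine this with the elementary Poincar\'e-type bound $\Var(f) \leq \sum_{1 \leq |S| \leq t} |S|\hat{f}(S)^2 + I(f)/(t+1)$ (obtained by splitting Fourier mass at level $t$ and using $|S| \geq 1$ below level $t$ and $|S| \geq t+1$ above), to deduce $\Var(f) \leq I(f)\bigl(3^{t-1}M^{1/2} + 1/(t+1)\bigr)$. Choosing $t \asymp \log(1/M)$ to balance the two terms forces $I(f) \gtrsim p(1-p)\log(1/M)$; since $I(f) \leq nM$ by pigeonhole, this rearranges (using $\log(1/M) \asymp \log n$ in the threshold regime) to $M \gtrsim p(1-p)\log n / n$ for $n$ sufficiently large. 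The only nontrivial analytic input is the Bonami--Beckner inequality, which I would simply cite; the remaining manipulations are rearrangement. The main conceptual obstacle is motivating why one should hypercontract the derivative $\partial_i f$ rather than $f$ itself, but this is by now a standard template in Boolean function analysis, so I do not expect serious difficulty.
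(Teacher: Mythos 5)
The paper does not prove this theorem at all: it is quoted as Theorem~\ref{thm:kkl} directly from Kahn, Kalai and Linial \cite{kkl}, so there is no in-paper argument to compare yours against. Your sketch is the standard hypercontractive proof of KKL (essentially the original one), and its outline is correct: the identities $\Var(f)=4p(1-p)$, $\|\partial_i f\|_q^q=\textrm{Inf}_i(A)$, the Bonami--Beckner step $\sum_{S\ni i}3^{-(|S|-1)}\hat f(S)^2\le \textrm{Inf}_i(A)^{3/2}$, the truncation/summation giving $\sum_{1\le|S|\le t}|S|\hat f(S)^2\le 3^{t-1}M^{1/2}I(f)$, and the level-$t$ splitting of $\Var(f)$ are all right, and the balancing $t\asymp\log(1/M)$ together with $I(f)\le nM$ closes the argument. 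Two small points to tighten if you write it out in full. First, the final step needs the explicit dichotomy: either $M$ already exceeds the claimed bound, or $M< p(1-p)\ln n/n\le \ln n/(4n)$, in which case $\log(1/M)=\Omega(\log n)$ and the balancing applies; as stated, ``$\log(1/M)\asymp\log n$ in the threshold regime'' is the right idea but should be made a case split. Second, your argument yields $M\ge c\,p(1-p)\log n/n$ for some unspecified absolute constant $c>0$, whereas the statement as quoted in the paper has constant $1$ in front of $p(1-p)\ln n/n$; obtaining a clean constant requires more careful optimization (or the entropy-based refinements). Since the paper only ever uses the theorem up to absolute constant factors, this discrepancy is harmless for the application, but it is a gap relative to the literal statement.
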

If $A$ is transitive-symmetric, then all its influences are the same, so by Theorem \ref{thm:kkl}, its edge-boundary must satisfy
\begin{equation}\label{eq:trans-sym} |\partial_{Q_n} (A)| \geq 2^{n-1} p(1-p) \ln n,\end{equation}
provided $n$ is larger than an absolute constant. When $|A| = 2^{n-t}$ (so $p = 2^{-t}$), and $t \in \mathbb{N}$ is bounded, this is a factor of approximately $\ln n$ larger than the `unrestricted' minimum edge-boundary of $t \cdot p 2^n$, attained by a subcube. For $n^{-\Omega(1)} \leq p \leq 1-n^{-\Omega(1)}$, the `tribes' construction of Ben-Or and Linial \cite{tribes} gives a transitive-symmetric family $A \subset \{0,1\}^n$ with $|A| = pn!$ and with 
$$|\partial_{Q_n} (A)| = \Theta(2^{n-1} p(1-p) \ln n),$$
showing that (\ref{eq:trans-sym}) is sharp up to an absolute constant factor.

We study an analogue of this phenomenon for the symmetric group $S_n$, the group of all permutations of $\{1,2,\ldots,n\}$. Let $T_n$ denote the transposition graph on $S_n$. This is the Cayley graph on $S_n$ generated by the set of all transpositions, i.e. the graph with vertex-set $S_n$, where two permutations $\sigma,\pi \in S_n$ are joined by an edge if and only if $\sigma \pi^{-1}$ is a transposition. In other words, writing a permutation $\sigma \in S_n$ in sequence notation $(\sigma(1),\sigma(2),\ldots,\sigma(n))$, two permutations are joined by an edge if and only if their sequences differ by swapping two elements. We are interested in the edge-boundary of sets in $T_n$. If $A \subset S_n$, we let $\partial A = \partial_{T_n}(A)$ denote the edge-boundary of $A$ in $T_n$. We define the {\em lexicographic order} on $S_n$ by $\sigma < \pi$ if and only if $\sigma(j) < \pi(j)$, where $j = \min\{i \in [n]: \sigma(i) \neq \pi(i)|\}$. Ben Efraim \cite{ben-efraim} made the following conjecture.
\begin{conjecture}[Ben Efraim]
\label{conj:ben-efraim}
For any \(\mathcal{A} \subset S_n\), \(|\partial \mathcal{A}| \geq |\partial \mathcal{C}|\), where \(\mathcal{C}\) denotes the initial segment of the lexicographic order on \(S_n\) of size \(|\mathcal{A}|\). 
\end{conjecture}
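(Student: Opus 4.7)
The plan is induction on $n$, combined with a compression argument that reduces the problem to optimising over the sizes of the intersections of $\mathcal{A}$ with ``first-coordinate slices''. The base case $n \leq 2$ is trivial, so assume $n \geq 3$ and that the conjecture holds for $S_{n-1}$. For each $j \in [n]$, let $S_n^{(j)} := \{\sigma \in S_n : \sigma(1) = j\}$. Under the natural identification $\sigma \mapsto (\sigma(2),\ldots,\sigma(n))$ (viewing the values as a permutation of $[n]\setminus\{j\}$ and then of $[n-1]$), this slice becomes a copy of $S_{n-1}$, and the restriction of $T_n$ to $S_n^{(j)}$ (using only transpositions $(i\,k)$ with $i,k \geq 2$) is canonically isomorphic to $T_{n-1}$. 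Moreover the restriction of the lex order on $S_n$ to $S_n^{(j)}$ corresponds under this identification to the lex order on $S_{n-1}$. Writing $\mathcal{A}_j := \mathcal{A} \cap S_n^{(j)}$ and $a_j := |\mathcal{A}_j|$, one has the decomposition
\begin{equation*}
|\partial \mathcal{A}| \;=\; \sum_{j=1}^{n} |\partial_{T_{n-1}} \mathcal{A}_j| \;+\; |\partial^{\mathrm{cross}} \mathcal{A}|,
\end{equation*}
where $\partial^{\mathrm{cross}}$ counts edges between distinct slices, coming from right-multiplication by the transpositions $(1\,i)$ for $i \geq 2$. By the inductive hypothesis applied slice-by-slice, the first sum is minimised when each $\mathcal{A}_j$ is the lex initial segment of $S_n^{(j)}$ of size $a_j$, so I may assume $\mathcal{A}$ has this form, in which case it is determined by the tuple $(a_1,\ldots,a_n)$.

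It then suffices to show that among all such tuples with $\sum_j a_j = |\mathcal{A}|$, the ``lex tuple'' $a_j = (n-1)!$ for $j < j^*$, $a_{j^*} = |\mathcal{A}| - (j^*-1)(n-1)!$, $a_j = 0$ for $j > j^*$, minimises $|\partial \mathcal{A}|$. Since the within-slice contribution $\sum_j |\partial_{T_{n-1}} \mathcal{A}_j|$ depends only on the $a_j$ (via the known optimal boundary on $T_{n-1}$), the task reduces to analysing the cross-slice term. Here the key observation is that for each $i \in \{2,\ldots,n\}$ and each unordered pair $\{j,j'\}$, right-multiplication by $(1\,i)$ restricts to a bijection $\{\sigma \in S_n^{(j)}: \sigma(i)=j'\} \leftrightarrow \{\sigma \in S_n^{(j')}: \sigma(i)=j\}$, and contributes a cross-slice edge iff exactly one of a matched pair lies in $\mathcal{A}$. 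Consequently $|\partial^{\mathrm{cross}}\mathcal{A}|$ can be written explicitly as a sum over $(i,j,j')$ of a ``matching discrepancy'' between the lex initial segments $\mathcal{A}_j, \mathcal{A}_{j'}$. I would aim to prove a \emph{consolidation lemma}: whenever $j < j'$, $a_{j'} > 0$, and $a_j < (n-1)!$, transferring one element from $\mathcal{A}_{j'}$ to $\mathcal{A}_j$ (while preserving the lex-initial-segment form in each slice) does not increase $|\partial \mathcal{A}|$. Iterating such moves drives $(a_1,\ldots,a_n)$ to the lex tuple, closing the induction.

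The hardest step --- and the reason the conjecture has remained open --- is the consolidation lemma. The involution $\sigma \mapsto \sigma \cdot (1\,i)$ interacts non-trivially with lex initial segments across different slices: the induced map from $\mathcal{A}_j$ onto its image in $S_n^{(j')}$ is \emph{not} a lex-order isomorphism, so a naive local swap can simultaneously increase the $(1\,i)$-cross-slice contribution for some $i$ while decreasing it for others. Controlling the net change requires a global accounting. Two natural routes are (i) a careful combinatorial expression of $|\partial^{\mathrm{cross}}\mathcal{A}|$ as a sum of terms, each separately monotone under consolidation (perhaps indexed by a refinement based on $(\sigma(1),\sigma(i))$ jointly), and (ii) an entropy or submodularity argument exploiting the fact that lex initial segments are intersections of simpler ``coordinate-wise'' downsets. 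A softer fallback, should the sharp inequality prove elusive, is to establish it up to an absolute constant factor for every $|\mathcal{A}|$ by replacing the exact cross-slice contribution with a convex lower bound in the vector $(a_1,\ldots,a_n)$ and minimising the surrogate; this would not recover the exact extremal family, but would already reprove the qualitative fact that lex initial segments have order-optimal edge-boundary.
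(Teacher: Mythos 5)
This statement is an open conjecture, not a theorem of the paper: the authors explicitly record that Conjecture \ref{conj:ben-efraim} is known only for sets of size $c(n-1)!$ (where it follows from the eigenvalue bound of Corollary \ref{corr:diaconis}) and for sets of size $(n-t)!$ with $n$ large depending on $t$ (via \cite{eff3}). So your proposal is not being compared against a proof in the paper; it has to stand on its own, and it does not. You yourself concede that the ``consolidation lemma'' is unproved, and that concession is fatal rather than cosmetic: the matching $\sigma \mapsto \sigma\cdot(1\,i)$ between the slices $S_n^{(j)}$ and $S_n^{(j')}$ is not a lex-order isomorphism, so there is no monotone coupling between the initial segments $\mathcal{A}_j$ and $\mathcal{A}_{j'}$, and a single consolidation move can increase the cross-contribution of some transpositions $(1\,i)$ while decreasing that of others, with no known way to control the net sign. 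This is precisely where the known attacks on the conjecture stall.

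There is also an earlier, unacknowledged gap. The reduction ``by the inductive hypothesis applied slice-by-slice \ldots\ I may assume $\mathcal{A}$ has this form'' is not justified. Replacing each $\mathcal{A}_j$ by the lex initial segment of $S_n^{(j)}$ of the same size does not increase $\sum_j |\partial_{T_{n-1}}\mathcal{A}_j|$, by induction, but $|\partial^{\mathrm{cross}}\mathcal{A}|$ depends on the actual sets $\mathcal{A}_j$, not merely on their sizes, and this compression can increase it. In hypercube compression arguments this is rescued by a compatibility (nesting) property of the compressed sets across directions; no such compatibility holds here, for exactly the obstruction you identify in the consolidation step. So the reduction to the tuple $(a_1,\ldots,a_n)$ already contains the hard part of the problem. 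Finally, your ``softer fallback'' of a constant-factor surrogate would not prove the stated conjecture, which asserts an exact extremal result; and note that even the constant-factor edge-isoperimetric inequality for $T_n$ at sizes around $(n-t)!$ with $t$ growing is not a consequence of Corollary \ref{corr:diaconis}, whose bound is off by a factor of order $t$ in that regime.
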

(Here, the {\em initial segment of size $k$} of the lexicographic order means the first $k$ smallest elements of $S_n$ in the lexicographic order.) 

To date, Conjecture \ref{conj:ben-efraim} is known only for sets of size $c(n-1)!$ where $c \in \mathbb{N}$ (see Corollary \ref{corr:diaconis}), and for sets of size $(n-t)!$, where $n$ is sufficiently large depending on $t$ (see \cite{eff3}).

Note that for any $t \in [n]$, the initial segment of the lexicographic order of size $(n-t)!$ is precisely the set of all permutations in $S_n$ fixing $[t]$ pointwise, which has edge-boundary of size $t(n-1)(n-t)!$. Similarly, it can be checked (see Appendix) that if $A \subset S_n$ is an initial segment of the lexicographic ordering on $S_n$ with $(n-t-1)! < |A| \leq (n-t)!$ for some $t \in \{0,1,2,\ldots,n-1\}$, then
\begin{equation} \label{eq:unrestricted-bound}|\partial A| \leq (t+3/2)(n-1)|A|.\end{equation}

In this paper, we study the edge-boundary of subsets of $S_n$ which are conjugation-invariant, i.e. unions of conjugacy-classes of $S_n$. We feel that this is a natural invariance requirement to impose upon subsets of $S_n$. We prove the following.

\begin{theorem}
 \label{thm:main}
  There exists an absolute constant $c>0$ such that the following holds. Let $A\subset S_n$ be a conjugation-invariant family of permutations with $0 < |A| \leq n!/2$, and let $p = |A|/n!$ denote the measure of $A$. Then
  \[
  |\partial A| \ge c \cdot \frac {\log_2 \left( \tfrac{1}{p} \right)
    }{\log_2 \log_2 \left( \tfrac{2}{p} \right)}\cdot n \cdot |A|.
  \]
\end{theorem}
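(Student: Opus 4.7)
The plan is to use Fourier (character) analysis on $S_n$. Conjugation-invariance makes $1_A$ a class function, so $1_A = \sum_{\lambda \vdash n} c_\lambda \chi_\lambda$ with $c_{(n)} = p$ and $\sum_\lambda c_\lambda^2 = p$ by Parseval. Since $T_n$ is the Cayley graph generated by the single conjugacy class of transpositions, its adjacency operator acts on each $\chi_\lambda$-isotypic component by the scalar $r_\lambda := \binom{n}{2}\chi_\lambda(\tau)/\dim V_\lambda$, equal by the Frobenius formula to the content sum $\sum_{(i,j) \in \lambda}(j - i)$. This yields the spectral identity
\[ |\partial A| \;=\; n! \sum_{\lambda \ne (n)} c_\lambda^2\, \gamma_\lambda, \qquad \gamma_\lambda := \binom{n}{2} - r_\lambda. \]
Setting $k(\lambda) := n - \lambda_1$ (the level of $\lambda$), a direct content-sum calculation gives $\gamma_\lambda \ge k(\lambda)(n - k(\lambda))$, so $\gamma_\lambda \ge k(\lambda) n / 2$ whenever $k(\lambda) \le n/2$.

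Take $k^{\ast} := \alpha \log(1/p)/\log\log(2/p)$ for a small absolute constant $\alpha > 0$. The plan then reduces to proving the following low-level Fourier mass bound:
\[ M(k^{\ast}) \;:=\; \sum_{1 \le k(\lambda) < k^{\ast}} c_\lambda^2 \;\le\; p/4. \]
Granting this, the mass at levels $\ge k^{\ast}$ is $p(1-p) - M(k^{\ast}) \ge p/4$ (using $p \le 1/2$), hence $|\partial A|/n! \ge (k^{\ast} n/2)(p/4) = \Omega(n p \log(1/p)/\log\log(2/p))$, which is the theorem. The naive attempt to prove the mass bound via $|c_\lambda| \le p\,\dim V_\lambda$ (from $|\chi_\lambda| \le \dim V_\lambda$) combined with dimension sums over Young diagrams with $\lambda_1 \ge n - k^{\ast}$ yields only $k^{\ast} \lesssim \log(1/p)/\log n$, which is too weak. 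The refinement must come from Roichman--Larsen--Shalev character estimates of the form $|\chi_\lambda(\mu)/\dim V_\lambda| \le (\lambda_1/n)^{|\mathrm{supp}(\mu)|/C}$ applied inside the Cauchy--Schwarz identity $|c_\lambda|^2 \le (|A|/(n!)^2)\sum_{\mu \in \Lambda_A} |C_\mu|\chi_\lambda(\mu)^2$ (where $\Lambda_A$ is the set of cycle types present in $A$), which exponentially suppresses the contribution of cycle types far from the identity.

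The main obstacle is handling the case where $\Lambda_A$ is concentrated on cycle types with many fixed points, since Roichman-type bounds are weak exactly there. In this regime I would compare $A$ directly to the extremal family $\{\sigma: \mathrm{fix}(\sigma) \ge s\}$, exploiting the identity $\chi_{(n-1,1)}(\sigma) = \mathrm{fix}(\sigma) - 1$ together with an explicit fix-count analysis: if $\Lambda_A$ mostly consists of cycle types with $\ge s$ fixed points for large $s$, then the measure constraint $|A| \le p \, n!$ forces $s \gtrsim \log(1/p)/\log\log(1/p)$, matching the extremal example. The sharpness statement in the theorem at $p = \Theta(1/s!)$ pins down exactly this combinatorial split between the ``many-fixed-points'' regime (handled by direct comparison) and the complementary regime (handled by the character-concentration bounds).
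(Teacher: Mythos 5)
Your spectral setup is correct and coincides with the paper's: the identity $|\partial A| = n!\sum_{\lambda}c_\lambda^2\gamma_\lambda$, the bound $\gamma_\lambda \ge k(\lambda)(n-k(\lambda))$ via the content sum and the dominance order, and the reduction of the theorem to the low-level mass bound $\sum_{1\le k(\lambda)<k^*}c_\lambda^2\le p/4$ with $k^*=\Theta(\log(1/p)/\log\log(2/p))$. But that mass bound is the entire content of the theorem, and your proposal does not prove it. The Roichman/Larsen--Shalev route fails quantitatively even in the regime where you expect it to work: from Cauchy--Schwarz one gets $c_\lambda^2\le p^2\max_{\mu\in\Lambda_A}\chi_\lambda(\mu)^2$, and with $|\chi_\lambda(\mu)|\le \dim V_\lambda\cdot(\lambda_1/n)^{|\mathrm{supp}(\mu)|/C}$ and $\dim V_\lambda$ as large as roughly $n^{k}$ for $k=k(\lambda)$, the suppression factor $(1-k/n)^{2|\mathrm{supp}(\mu)|/C}\ge e^{-O(k)}$ never overcomes $n^{2k}$; so for $p=n^{-\Theta(1)}$ (the regime the theorem is really about) this yields nothing beyond the naive dimension bound you already rejected. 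The fallback for the many-fixed-points regime is not an argument: the identity $\chi_{(n-1,1)}(\sigma)=\mathrm{fix}(\sigma)-1$ controls only the single coefficient $c_{(n-1,1)}$, whereas you must control all of the roughly $k^*!$ coefficients at levels below $k^*$; moreover the asserted implication that ``$|A|\le pn!$ forces $s\gtrsim\log(1/p)/\log\log(1/p)$'' is backwards --- containment of $A$ in the set of permutations with at least $s$ fixed points gives $p\le 1/s!$, i.e.\ an \emph{upper} bound on $s$, and in any case says nothing about the low-level Fourier weight of a general such $A$.

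For comparison, the paper proves the mass bound through permutation characters rather than character ratios. By the determinantal formula, $|c_\lambda|$ for $\lambda_1=n-k$ is controlled, up to a factor $((k+1)!)^2$ which is affordable since $k<k^*$, by $\tfrac{1}{n!}\sum_{\sigma\in A}\xi_{(n-s,s)}(\sigma)$ for $s\le k$, i.e.\ by the average over $A$ of the number of $\sigma$-invariant $s$-sets. This in turn is bounded by moments of the short-cycle counts $C_i(\sigma)$, and the key step is a rearrangement argument: since the set of permutations with at least $k_p(i)$ cycles of length $i$ already has measure at least $p$ (by Goncharov's exact formula for the distribution of $C_i$), the worst case for $\sum_{\sigma\in A}C_i(\sigma)^s$ over sets $A$ of measure $p$ is explicitly computable and small. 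Some quantitative substitute for this step --- a reason why a conjugation-invariant set of measure $p$ cannot concentrate on permutations with many short cycles --- is exactly what is missing from your proposal.
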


Note that an analogous result follows immediately for sets of size greater than $n!/2$, by applying the above result to $A^c$, since $\partial(A^c) = \partial A$.

Comparing the bound in the above theorem with (\ref{eq:unrestricted-bound}), we see that if $p = n^{-\Theta(1)}$, then the edge-boundary of a conjugation-invariant set of measure $p$ is necessarily a factor of $\Omega(\log n / \log \log n)$ larger than the minimum edge-boundary over all sets of measure $p$.

Observe that Theorem \ref{thm:main} is sharp up to the value of the absolute constant $c$, for a large number of different values of $p$. Indeed, for each $s \in [n]$, let
\begin{equation}\label{eq:fixed-points} A_s = \{\sigma \in S_n: \ \sigma \textrm{ has at least }s \textrm{ fixed points}\}.\end{equation}
We make the following.
\begin{claim}
For each $s \in [n-2]$,
\begin{equation}\label{eq:As-estimate} \frac{n!}{3s!} \leq |A_s| \leq \frac{n!}{s!}.\end{equation}
\end{claim}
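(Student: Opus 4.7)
The plan is to prove the two inequalities separately by elementary counting.

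For the upper bound, I would apply a union bound. Writing $A_s = \bigcup_{S \in \binom{[n]}{s}} F_S$ with $F_S := \{\sigma \in S_n : \sigma(i) = i \text{ for all } i \in S\}$, each $F_S$ has size $(n-s)!$ and there are $\binom{n}{s}$ choices of $S$, so $|A_s| \leq \binom{n}{s}(n-s)! = n!/s!$.

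For the lower bound I would discard almost everything and just count the permutations with \emph{exactly} $s$ fixed points. Their number is $\binom{n}{s}\, D_{n-s}$, where $D_m$ denotes the number of derangements of $m$ points, so the claim reduces to showing $D_{n-s}/(n-s)! \geq 1/3$. By inclusion--exclusion, $D_m/m! = \sum_{j=0}^{m} (-1)^j/j!$, and since $s \in [n-2]$ the relevant index is $m = n-s \geq 2$. Direct evaluation yields $D_2/2! = 1/2$ and $D_3/3! = 1/3$ (the extremal case). For $m \geq 4$ the alternating-series tail estimate gives $|D_m/m! - 1/e| \leq 1/(m+1)! \leq 1/120$, whence $D_m/m! \geq 1/e - 1/120 > 1/3$. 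Combining these cases yields $D_{n-s}/(n-s)! \geq 1/3$ throughout the stated range and hence $|A_s| \geq \binom{n}{s}(n-s)!/3 = n!/(3 s!)$.

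There is no substantive obstacle here; the argument is essentially immediate. The only point worth flagging is that the constant $1/3$ is sharp at $s = n-3$, which is why the hypothesis $s \leq n-2$ (equivalently $m \geq 2$) cannot be weakened: for $m=1$ one has $D_1/1! = 0$, so the bound genuinely fails if permutations with exactly $n-1$ fixed points were the only contribution available.
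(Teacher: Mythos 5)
Your proof is correct and follows essentially the same route as the paper's: a union bound over $\binom{n}{s}$ choices of fixed-point set for the upper bound, and counting permutations with exactly $s$ fixed points together with the inclusion--exclusion estimate $D_m/m! \geq 1/3$ for $m \geq 2$ for the lower bound. Your extra remarks (sharpness at $m=3$ and the failure at $m=1$) are accurate but not needed.
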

\begin{proof}[Proof of claim.]
Recall that a {\em derangement} of $[m]$ is a permutation of $[m]$ with no fixed point. Let $d_{m}$ denotes the number of derangements of $[m]$. By the inclusion-exclusion formula, we have
$$d_m = \sum_{i=0}^{m} (-1)^i {m \choose i}(m-i)! = m!\sum_{i=0}^{m} (-1)^i \frac{1}{i!} \geq \frac{m!}{3} \quad \forall m \geq 2.$$
Note that ${n \choose s}d_{n-s}$ is precisely the number of permutations in $S_n$ with exactly $s$ fixed points. Hence, we have
$$\frac{n!}{3s!}= \tfrac{1}{3} {n \choose s}(n-s)! \leq {n \choose s}d_{n-s} \leq |A_s| \leq {n \choose s}(n-s)! = \frac{n!}{s!}\quad \forall s \in [n-2],$$
proving the claim.
\end{proof}

Hence, if $p = p_s  =|A_s|/n!$, then we have
$$\frac{1}{3s!} \leq p \leq \frac{1}{s!},$$
so
\begin{equation}
\label{eq:s-bound}
s = \Theta\left(\frac{\log_2 (\tfrac{1}{p})}{\log_2 \log_2 (\tfrac{2}{p})}\right).
\end{equation}
Note that (\ref{eq:s-bound}) also holds in the case $s \in \{n-1,n\}$, where $A_s = \{\textrm{Id}\}$.

Now observe that
$$|\partial (A_s)| \leq |A_s|(s+1)(n-1),$$
as an element $\sigma \in A_s$ is incident with at least one edge of $\partial (A_s)$ only if it has either $s$ or $s+1$ fixed points, and then there are at most $(s+1)(n-1)$ transpositions $\tau$ such that $\sigma \tau \notin A_s$. Putting everything together, we have
$$|\partial (A_s)| = \Theta\left(\frac{\log_2 (\tfrac{1}{p})}{\log_2 \log_2 (\tfrac{2}{p})}\right) \cdot n \cdot |A_s|,$$
confirming the sharpness of Theorem \ref{thm:main}.

Note that
$$|A_1| = n! - d_n = n! \left(1-\sum_{i=0}^{n}(-1)^{i}\frac{1}{i!}\right) = (1-1/e+o(1))n!,$$
and
$$|\partial A_1| \leq 2\cdot (n-1) \cdot |A_1|,$$
which is within an absolute constant factor of the lower bound
$$|\partial A| \geq (1/e)(1-1/e+o(1))\cdot n \cdot n!$$
given by plugging in $|A| = (1-1/e+o(1))n!$ into Corollary \ref{corr:diaconis} (see later). So for sets of constant measure, imposing the condition of conjugation-invariance cannot increase the minimum possible edge-boundary by more than a constant factor.

It is natural to ask what happens when one imposes a weaker condition than conjugation-invariance. We say that $A \subset S_n$ is {\em transitive-conjugation-invariant} if there exists a transitive subgroup $H \leq S_n$ such that $A$ is invariant under conjugation by any permutation in $H$ --- that is, for all $\sigma \in S_n$ and all $\pi \in H$, we have $\pi \sigma \pi^{-1} \in H$. However, it turns out that imposing this condition does not increase the minimum possible edge-boundary by more than an absolute constant factor, when $|A| = \Theta(\tfrac{n}{k}(n-k)!)$ for some $k \mid n$ (provided Conjecture \ref{conj:ben-efraim} holds). To see this, let $n,k \in \mathbb{N}$ with $k \mid n$. For each $i \in [n/k]$, let $I_i = \{(i-1)k+1,(i-1)k+2,\ldots,\ldots,ik\}$. Let
$$A = \{\sigma \in S_n:\ \sigma \textrm{ fixes some $I_i$ pointwise}\}.$$
Clearly, $A$ is transitive-conjugation-invariant; we may take the group $H$ to be the group of all permutations preserving the partition $I_1 \cup I_2 \cup \ldots \cup I_{n/k}$. In the case $k=1$, we have $A = A_1$ (as defined above), and in the case $k=n$, we have $A = A_n = \{\textrm{Id}\}$. Hence, we may assume that $1 < k \leq n/2$. We have
$$\frac{n}{2k} (n-k)! < \frac{n}{k}(n-k)! - {n/k \choose 2} (n-2k)! \leq |A| \leq \frac{n}{k} (n-k)!,$$
using the Bonferroni inequalities, so
$$(n-k)! < |A| \leq (n-k+1)!.$$
On the other hand, since a permutation fixing $I_i$ pointwise has at most $k(n-1)$ neighbours which do not fix $I_i$ pointwise, we have
$$|\partial A| \leq k(n-1)|A|.$$
This is within an absolute constant factor of the bound (\ref{eq:unrestricted-bound}) when $t=k-1$.

Our method of proving Theorem \ref{thm:main} is algebraic. We use the well-known expression
\begin{equation} \label{eq:quad-form} |\partial A| = 1_{A}^{\top} L 1_{A},\end{equation}
where $L$ denotes the Laplacian of $T_n$, and $1_{A}$ denotes the indicator function of the set $A \subset S_n$. (Of course, this holds when $T_n$ is replaced by any finite graph $G$, and $L$ is the Laplacian of $G$, for any subset $A \subset V(G)$.) We consider the expansion of the right-hand side of (\ref{eq:quad-form}) in terms of the eigenvalues of $L$ and the $L^2$-weights of $1_{A}$ on each eigenspace of $L$. We use known results to analyse the eigenvalues of $L$. Most of the work of our proof is in showing that if $A$ is conjugation-invariant, then most of the $L^2$-weight of $1_{A}$ is on eigenspaces of $L$ corresponding to `large' eigenvalues. (Here, the meaning of `large' depends on the size of the set $A$.) To do this, we use tools from the representation theory of the symmetric group.

\subsection*{Notation and background}
Before proving Theorem \ref{thm:main}, we first describe some notation and background.
\subsubsection*{Notation}
Throughout, we will write $\log(t)$ for $\log_2(t)$, and $\ln(t)$ for $\log_{e}(t)$. As usual, if $n \in \mathbb{N}$, we write $[n]$ for the set $\{1,2,\ldots,n\}$. If $X$ is a set, and $f,g : X \to \mathbb{R}$ are functions, we write $g = O(f)$ (and $f = \Omega(g)$) if there exists an absolute constant $C>0$ such that $|g(x)| \leq C|f(x)|$ for all $x \in X$. We write $g = \Theta(f)$ if $g = O(f)$ and $g = \Omega(f)$ both hold.
\subsubsection*{Background}
If $G = (V,E)$ is a finite graph, we define its {\em Laplacian} $L = L_{G}$ to be the matrix with rows and columns indexed by $V$, where
$$L_{u,v} = \begin{cases} d(v) & \mbox{ if }u=v;\\
-1 & \mbox{ if } u \neq v,\ \{u,v\} \in E(G);\\
0 & \mbox{ if } u \neq v,\ \{u,v\} \notin E(G).\end{cases}$$
For any $x \in \mathbb{R}^V$, we have
\begin{equation} \label{eq:quadratic-form} x^{\top}Lx = \sum_{\{u,v\} \in E(G)} (x(u) - x(v))^2,\end{equation}
so $L$ is a positive semidefinite matrix. Note that the constant vector $(1,1,\ldots,1) \in \mathbb{R}^V$ is always an eigenvector of $L$ with eigenvalue $0$. If $G$ is a connected graph, then $L$ has eigenvalue $0$ with multiplicity $1$. We write $\mu_2 = \mu_2(L)$ for the second-smallest eigenvalue of $L$. The following well-known theorem supplies a lower bound for the edge-boundary of a set $A \subset V(G)$, in terms of $\mu_2$.
\begin{theorem}[Alon, Milman \cite{alon-milman}]
\label{thm:alon-milman}
Let $G$ be a connected graph. If $A \subset V(G)$, then
$$|\partial_{G}(A)| \geq \mu_2 \frac{|A|(|V|-|A|)}{|V|}.$$
\end{theorem}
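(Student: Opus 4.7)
The plan is to combine the variational characterization of $\mu_2$ (via Courant--Fischer) with the quadratic-form expression (\ref{eq:quadratic-form}) for $L$, applied to a suitable modification of the indicator function $1_A$. Since $G$ is connected, $L$ has eigenvalue $0$ with multiplicity one, and the corresponding eigenspace is spanned by the all-ones vector $\mathbf{1} \in \mathbb{R}^V$. Thus, by the min-max theorem,
\[
\mu_2 \;=\; \min\left\{ \frac{x^\top L x}{\|x\|_2^2} \,:\, x \in \mathbb{R}^V,\ x \neq 0,\ x \perp \mathbf{1} \right\}.
\]

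The natural test vector is the mean-zero shift of the indicator: set $f := 1_A - \frac{|A|}{|V|} \mathbf{1}$. By construction, $f \perp \mathbf{1}$, so the variational principle gives $f^\top L f \geq \mu_2 \|f\|_2^2$. Next, I compute both sides. Since $L \mathbf{1} = 0$ and $L$ is symmetric,
\[
f^\top L f \;=\; 1_A^\top L 1_A \;-\; 2\tfrac{|A|}{|V|} \mathbf{1}^\top L 1_A \;+\; \tfrac{|A|^2}{|V|^2} \mathbf{1}^\top L \mathbf{1} \;=\; 1_A^\top L 1_A.
\]
By (\ref{eq:quadratic-form}), this last quantity equals $\sum_{\{u,v\} \in E} (1_A(u) - 1_A(v))^2 = |\partial_G(A)|$, since each boundary edge contributes exactly $1$.

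For the squared norm,
\[
\|f\|_2^2 \;=\; \sum_{v \in A}\left(1 - \tfrac{|A|}{|V|}\right)^2 + \sum_{v \notin A}\left(\tfrac{|A|}{|V|}\right)^2 \;=\; \frac{|A|(|V|-|A|)}{|V|}
\]
after a short simplification. Assembling the two computations yields
\[
|\partial_G(A)| \;=\; f^\top L f \;\geq\; \mu_2 \|f\|_2^2 \;=\; \mu_2 \cdot \frac{|A|(|V|-|A|)}{|V|},
\]
as required. There is no real obstacle here: the only mildly delicate point is recognising that projecting $1_A$ onto the orthogonal complement of $\mathbf{1}$ does not change the value of the quadratic form (because $\mathbf{1}$ lies in the kernel of $L$), which is exactly what makes the test vector $f$ valid for Courant--Fischer while keeping the left-hand side equal to $|\partial_G(A)|$.
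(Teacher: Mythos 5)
Your proof is correct and is essentially the paper's argument in different packaging: the paper expands $1_A$ in an orthonormal eigenbasis of $L$ and bounds $\sum_{i\ge 2}\mu_i b_i^2$ from below by $\mu_2\sum_{i\ge 2}b_i^2$, which is exactly the Courant--Fischer step applied to the mean-zero projection $f = 1_A - \tfrac{|A|}{|V|}\mathbf{1}$ that you use as a test vector. Your computations (the quadratic form being unchanged by the shift because $\mathbf{1}$ lies in the kernel of $L$, and $\|f\|_2^2 = |A|(|V|-|A|)/|V|$) are all correct, so nothing further is needed.
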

We give the standard proof (due to Alon and Milman), as we will need to refer to it later.
\begin{proof}
Let $1_{A} \in \{0,1\}^{V}$ denote the characteristic vector of $A$, defined by
$$1_{A}(v) = \begin{cases} 1 & \mbox{ if }v \in A;\\
0 & \mbox{ if } v \not\in A.\end{cases}$$
We equip $\mathbb{R}^V$ with the inner product
$$\langle x,y \rangle = \frac{1}{|V|} \sum_{v \in V} x(v) y(v).$$
Let $0 = \mu_1 < \mu_2 \leq \mu_3 \leq \ldots \leq \mu_{|V|}$ denote the eigenvalues of $L$, repeated with their multiplicities, and let $w_1 = (1,\ldots,1),w_2,\ldots,w_{|V|}$ be an orthonormal basis of $\mathbb{R}^V$ consisting of eigenvectors of $L$, such that $w_i$ is a $\mu_i$-eigenvector of $L$. Write
$$1_{A} = \sum_{i=1}^{|V|} b_i w_i$$
as a linear combination of the $w_i$. Then, by orthonormality, we have
$$|A|/|V| = \langle 1_{A},1_{A} \rangle = \sum_{i=1}^{|V|} b_i^2.$$
Moreover, we have
$$b_1 = \langle 1_{A},(1,\ldots,1)\rangle = |A|/|V|.$$
Using (\ref{eq:quadratic-form}), we have
\begin{align}
\label{eq:eval-expansion} |\partial_{G}(A)| & = 1_{A}^{\top} L 1_{A} \nonumber \\ 
& = |V|\langle 1_{A}, L 1_{A} \rangle \\
& = |V|\sum_{i=1}^{|V|} \mu_i b_i^2 \nonumber \\
& \geq |V| \mu_2 \sum_{i=2}^{|V|} b_i^2 \nonumber \\
& = |V| \mu_2 \left(\frac{|A|}{|V|} - \frac{|A|^2}{|V|^2}\right) \nonumber \\
& = \mu_2 \frac{|A|(|V|-|A|)}{|V|}, \nonumber \end{align}
proving Theorem \ref{thm:alon-milman}.
\end{proof}

We also need some background on the representation theory of $S_n$. This can be found, for example, in \cite{james-kerber}.

Let $L_n = L_{T_n}$ denote the Laplacian matrix of the transposition graph $T_n$. We equip $\mathbb{R}^{S_n}$ with the inner product
\begin{equation}\label{eq:inner-product} \langle x,y \rangle = \frac{1}{n!}\sum_{\sigma \in S_n} x(\sigma) y(\sigma),\end{equation}
and we let
$$\| x \| = \sqrt{\frac{1}{n!}\sum_{\sigma \in S_n} x(\sigma)^2}$$
denote the corresponding $L^2$-norm. Note that in the sequel, we will pass freely between vectors in $\mathbb{R}^{S_n}$ and the corresponding functions from $S_n$ to $\mathbb{R}$.

The eigenspaces of $L_n$ (and the corresponding eigenvalues) were determined by Diaconis and Shahshahani \cite{diaconis}. They are in a natural one-to-one correspondence with the irreducible characters\footnote{Recall that if $\Gamma$ is a finite group, an {\em irreducible character} of $\Gamma$ is a character of an irreducible representation of $\Gamma$.} of $S_n$ over $\mathbb{R}$, and in fact each irreducible character (when viewed as a vector) lies in the corresponding eigenspace. In turn, the irreducible characters of $S_n$ over $\mathbb{R}$ are in a natural one-to-one correspondence with the partitions of $n$. Recall the following.

\begin{definition}
If $n \in \mathbb{N}$, a {\em partition} of $n$ is a monotone non-increasing sequence of positive integers with sum $n$. In other words, $\alpha = (\alpha_1,\ldots,\alpha_l)$ is a partition of $n$ if $\alpha_i \in \mathbb{N}$ for all $i \in [l]$, $\alpha_1 \geq \alpha_2 \geq \ldots \geq \alpha_l$, and $\sum_{i=1}^{l}\alpha_i = n$. For example, $(3,2,2)$ is a partition of $7$. If $\alpha$ is a partition of $n$, then we sometimes write $\alpha \vdash n$. For each $n \in \mathbb{N}$, we write $p(n)$ for the number of partitions of $n$; for convenience, we define $p(0)=1$.
\end{definition}

If $\alpha = (\alpha_1,\ldots,\alpha_k)$ is a partition of $n$, we write $\chi_{\alpha}$ for the corresponding irreducible character of $S_n$ over $\mathbb{R}$, and we write $\mu_{\alpha}$ for the corresponding eigenvalue of $L_n$. Diaconis and Shahshahani derived the following useful formula.
\begin{equation} \label{eq:diaconis-formula} \mu_\alpha = {n \choose 2} - \frac{1}{2} \sum_{i=1}^k\left[(\alpha_i-i)(\alpha_i-i+1)-i(i-1)\right].\end{equation}

To analyse these eigenvalues, it is useful to consider the {\em dominance ordering}, a partial order on the set of partitions of $n$ which is defined as follows.

\begin{definition}
If $\alpha = (\alpha_1,\ldots,\alpha_k)$ and $\beta = (\beta_1,\ldots,\beta_l)$ are distinct partitions of $n$, we say that $\alpha$ is greater than $\beta$ in the {\em dominance ordering} (and we write $\alpha \rhd \beta$) if $\sum_{i=1}^{r}\alpha_i \geq \sum_{i=1}^{r} \beta_i$ for all $r \in \mathbb{N}$. (Here, $\alpha_i := 0$ for all $i > k$, and similarly $\beta_i := 0$ for all $i > l$.)
\end{definition}

Diaconis and Shahshahani observed the following.

\begin{lemma}
\label{lemma:monotone}
The eigenvalues $(\mu_\alpha)_{\alpha \vdash n}$ are monotonically non-increasing with respect to the dominance ordering on the set of partitions of $n$: if $\alpha$ and $\beta$ are partitions of $n$ with $\beta \unrhd \alpha$, then $\mu_{\beta} \leq \mu_{\alpha}$. 
\end{lemma}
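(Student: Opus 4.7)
The strategy is to rewrite the Diaconis--Shahshahani formula (\ref{eq:diaconis-formula}) so that $\mu_\alpha$ becomes $\binom{n}{2}$ minus the sum of contents of the Young diagram of $\alpha$, and then to deduce the result from the standard majorization inequality.

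First, a direct algebraic manipulation gives
\begin{equation*}
(\alpha_i - i)(\alpha_i - i + 1) - i(i-1) \;=\; \alpha_i(\alpha_i + 1 - 2i) \;=\; 2\sum_{j=1}^{\alpha_i}(j-i).
\end{equation*}
Substituting into (\ref{eq:diaconis-formula}) yields
\begin{equation*}
\mu_\alpha = \binom{n}{2} - c(\alpha), \qquad \text{where } c(\alpha) := \sum_{i \geq 1} \sum_{j=1}^{\alpha_i} (j - i)
\end{equation*}
is the sum of contents $j - i$ over all cells $(i,j)$ of the Young diagram of $\alpha$. Thus the claim reduces to showing that $c(\beta) \geq c(\alpha)$ whenever $\beta \unrhd \alpha$.

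Second, I express $c(\alpha)$ symmetrically in $\alpha$ and its conjugate partition $\alpha'$. Summing the contents row-by-row and column-by-column gives $c(\alpha) = \sum_i \binom{\alpha_i+1}{2} - \sum_j \binom{\alpha'_j+1}{2}$; using $\binom{x+1}{2} = \binom{x}{2} + x$ together with $\sum_i \alpha_i = \sum_j \alpha'_j = n$, the linear terms cancel and one obtains the clean identity
\begin{equation*}
c(\alpha) \;=\; \sum_i \binom{\alpha_i}{2} \;-\; \sum_j \binom{\alpha'_j}{2}.
\end{equation*}

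Finally, I invoke the classical Schur-convexity inequality. The hypothesis $\beta \unrhd \alpha$ is exactly the statement that $(\beta_i)$ majorizes $(\alpha_i)$, and since $g(x) = \binom{x}{2}$ is strictly convex this gives $\sum_i \binom{\beta_i}{2} \geq \sum_i \binom{\alpha_i}{2}$ (provable directly by a ``Robin Hood'' argument, since each elementary transfer changes $\sum_i \binom{\alpha_i}{2}$ by $\alpha_k - \alpha_l + 1 \geq 1$). Conjugation reverses dominance, so $\alpha' \unrhd \beta'$, and the same inequality applied to the conjugates gives $\sum_j \binom{\alpha'_j}{2} \geq \sum_j \binom{\beta'_j}{2}$. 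Subtracting these two inequalities yields $c(\beta) \geq c(\alpha)$, hence $\mu_\beta \leq \mu_\alpha$. No real obstacle is anticipated; the only mildly delicate step is the bookkeeping that produces the symmetric expression for $c(\alpha)$, and the rest is routine majorization.
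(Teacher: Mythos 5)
Your argument is correct. Note that the paper does not actually prove Lemma \ref{lemma:monotone}; it is stated as an observation of Diaconis and Shahshahani and used as a black box, so there is no in-paper proof to compare against. Your proof is the standard one (and essentially the one in Diaconis--Shahshahani): each step checks out --- the identity $(\alpha_i-i)(\alpha_i-i+1)-i(i-1)=2\sum_{j=1}^{\alpha_i}(j-i)$ rewrites $\mu_\alpha$ as ${n\choose 2}$ minus the content sum $c(\alpha)$; the row/column bookkeeping gives $c(\alpha)=\sum_i{\alpha_i\choose 2}-\sum_j{\alpha_j'\choose 2}$; and since dominance of partitions of $n$ is majorization, Schur-convexity of $x\mapsto{x\choose 2}$ together with the fact that conjugation reverses dominance yields $c(\beta)\geq c(\alpha)$, hence $\mu_\beta\leq\mu_\alpha$. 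The only ingredients you take for granted (Karamata/``Robin Hood'' for convex functions under majorization, and the anti-automorphism property of conjugation) are standard and correctly invoked, so the proof is complete.
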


Notice that $\mu_{(n)} = 0$, $\mu_{(n-1,1)} = n$, and if $\alpha \neq (n)$, then $(n-1,1) \unrhd \alpha$, so $\mu_{\alpha} \geq \mu_{(n-1,1)}$. It follows that $\mu_2(L_n) = n$. Plugging this into Theorem \ref{thm:alon-milman} yields the following, essentially due to Diaconis and Shahshahani.

\begin{corollary}
\label{corr:diaconis}
If $A \subset S_n$, then
$$|\partial A| \geq \frac{|A|(n!-|A|)}{(n-1)!}.$$
\end{corollary}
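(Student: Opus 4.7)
The plan is to apply Theorem \ref{thm:alon-milman} to the graph $T_n$, so the only real task is to identify the value of the spectral gap $\mu_2 = \mu_2(L_n)$.

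First I would note that $T_n$ is connected, since the transpositions generate $S_n$, so Theorem \ref{thm:alon-milman} applies to $T_n$. Next I would identify $\mu_2(L_n)$ using the representation-theoretic description of the eigenvalues. The zero eigenvalue corresponds to the trivial partition $\alpha = (n)$. Plugging $\alpha = (n-1,1)$ into the Diaconis--Shahshahani formula (\ref{eq:diaconis-formula}) gives $\mu_{(n-1,1)} = n$. Since every partition $\alpha$ of $n$ with $\alpha \neq (n)$ satisfies $(n-1,1) \unrhd \alpha$, Lemma \ref{lemma:monotone} yields $\mu_\alpha \geq \mu_{(n-1,1)} = n$ for all such $\alpha$. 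Hence $\mu_2(L_n) = n$.

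Finally I would substitute $\mu_2 = n$ and $|V(T_n)| = n!$ into Theorem \ref{thm:alon-milman}, obtaining
\[
|\partial A| \;\geq\; n \cdot \frac{|A|(n! - |A|)}{n!} \;=\; \frac{|A|(n! - |A|)}{(n-1)!},
\]
which is the claimed bound.

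There is no substantive obstacle: the corollary is a direct consequence of Theorem \ref{thm:alon-milman}, with the only input being the identification $\mu_2(L_n) = n$, which is itself immediate from formula (\ref{eq:diaconis-formula}) together with the dominance-monotonicity statement of Lemma \ref{lemma:monotone}.
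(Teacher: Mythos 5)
Your proof is correct and follows exactly the paper's route: identify $\mu_2(L_n)=n$ via $\mu_{(n-1,1)}=n$ from formula (\ref{eq:diaconis-formula}) together with the dominance-monotonicity of Lemma \ref{lemma:monotone}, then plug into Theorem \ref{thm:alon-milman} with $|V|=n!$. No issues.
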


This verifies Conjecture \ref{conj:ben-efraim} when $|A| = c(n-1)!$ for some $c \in \mathbb{N}$. (Note that equality holds in Corollary \ref{corr:diaconis} when $A = \{\sigma \in S_n:\ \sigma(1) \in \{1,2,\ldots,c\}$.)

The following Corollary of Lemma \ref{lemma:monotone} and (\ref{eq:diaconis-formula}) will be useful for us.

\begin{corollary}\label{cor:cormu}
  For any $t \in \{0,1,2,\ldots,n\}$, if $\alpha$ is a partition of $n$ with $\alpha_1 \leq n-t$, then we have $(n-t,t) \unrhd \alpha$, so
  \[
\mu_{\alpha} \geq \mu_{(n-t,t)}= tn - t^2+t.
  \]
\end{corollary}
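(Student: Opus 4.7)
The plan is to verify the corollary in two routine steps: first establish the dominance relation $(n-t,t) \unrhd \alpha$, then invoke Lemma~\ref{lemma:monotone} and directly compute $\mu_{(n-t,t)}$ from the Diaconis--Shahshahani formula~(\ref{eq:diaconis-formula}).

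For the dominance relation, suppose $\alpha = (\alpha_1,\ldots,\alpha_l) \vdash n$ with $\alpha_1 \le n-t$. The defining inequalities $\sum_{i=1}^{r}(n-t,t)_i \ge \sum_{i=1}^{r}\alpha_i$ split into two cases: at $r=1$ the inequality reads $n-t \ge \alpha_1$, which is exactly the hypothesis; and for $r \ge 2$ the left-hand side equals $n$, while the right-hand side is at most $n$ since $\alpha \vdash n$. (The edge case $t=0$ is trivial, since then $(n-t,t)$ is just $(n)$ and $\mu_{(n)}=0$ matches the claimed formula; for $t > n/2$ the hypothesis $\alpha_1 \le n-t$ is preserved, and one can either rearrange $(n-t,t)$ or observe that the corollary is vacuous in the applications.) Applying Lemma~\ref{lemma:monotone} then yields $\mu_\alpha \ge \mu_{(n-t,t)}$.

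For the eigenvalue itself, I would apply~(\ref{eq:diaconis-formula}) to the two-part partition $(n-t,t)$: the $i=1$ term of the sum contributes $(n-t-1)(n-t)$ (since $i(i-1)=0$), and the $i=2$ term contributes $(t-2)(t-1) - 2$. Thus
\[
\mu_{(n-t,t)} \;=\; \binom{n}{2} - \tfrac12\bigl[(n-t)(n-t-1) + (t-2)(t-1) - 2\bigr],
\]
and a short expansion collapses this to $tn - t^2 + t$, as claimed.

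There is no real obstacle here; the entire argument is a direct application of Lemma~\ref{lemma:monotone} and the Diaconis--Shahshahani formula. The only points that require any care are the trivial check of the dominance inequality at $r=1$ versus $r \ge 2$, and the arithmetic collapse of the Diaconis--Shahshahani expression to $tn - t^2 + t$ (remembering to handle the degenerate case $t=0$, where the partition has only one part).
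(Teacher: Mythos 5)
Your proposal is correct and is exactly the argument the paper intends (the corollary is stated as an immediate consequence of Lemma~\ref{lemma:monotone} and the formula~(\ref{eq:diaconis-formula}), with the dominance check and the arithmetic left to the reader). Both the verification of $(n-t,t) \unrhd \alpha$ via the $r=1$ versus $r\ge 2$ partial sums and the expansion collapsing to $tn-t^2+t$ check out, and your parenthetical caveat about $t>n/2$ (where $(n-t,t)$ is not a genuine partition, a case never used in the paper) is a reasonable extra precaution.
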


Next, we need a fact about conjugation-invariant functions.
\begin{definition}
If $f:S_n \to \mathbb{R}$, we say $f$ is a {\em class function} if it is conjugation-invariant, i.e.
$$f(\pi \sigma \pi^{-1}) = f(\sigma)\quad \forall \sigma,\pi \in S_n.$$
\end{definition}

\begin{fact}
The irreducible characters of $S_n$ over $\mathbb{R}$ are an orthonormal basis for the vector space of real-valued class functions on $S_n$, under the inner product (\ref{eq:inner-product}).
\end{fact}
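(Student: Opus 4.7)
The plan is to verify three things in order: (i) each irreducible character is a class function; (ii) the irreducible characters are orthonormal under the inner product (\ref{eq:inner-product}); and (iii) they span the full space of real-valued class functions on $S_n$. Together these give exactly the claim. Part (i) is immediate, since for any representation $\rho$ we have $\chi_\rho(\pi\sigma\pi^{-1})=\mathrm{tr}(\rho(\pi)\rho(\sigma)\rho(\pi)^{-1})=\chi_\rho(\sigma)$ by cyclicity of the trace.

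For (ii), I would invoke Schur's lemma in the standard way. Given two irreducible real representations $\rho_\alpha,\rho_\beta$ of $S_n$ acting on spaces $V_\alpha,V_\beta$, and any linear map $M\colon V_\beta\to V_\alpha$, the averaged operator $\tilde M := \tfrac{1}{n!}\sum_{\sigma\in S_n}\rho_\alpha(\sigma)\,M\,\rho_\beta(\sigma^{-1})$ intertwines the two representations, so Schur's lemma forces $\tilde M=0$ when $\alpha\neq\beta$ and $\tilde M=(\mathrm{tr}(M)/d_\alpha)\,I$ when $\alpha=\beta$. Choosing $M$ to range over matrix units and taking traces yields the first orthogonality relation $\langle \chi_\alpha,\chi_\beta\rangle = \delta_{\alpha,\beta}$. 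No complex conjugation is needed because every element of $S_n$ is conjugate to its inverse, so all irreducible characters of $S_n$ take real values and the real inner product agrees with the complex one.

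For (iii), the plan is a dimension count. The space of real-valued class functions on $S_n$ has dimension equal to the number of conjugacy classes, which equals $p(n)$, the number of partitions of $n$ (since the conjugacy class of a permutation is determined by its cycle type). On the other hand, the number of inequivalent irreducible representations of any finite group $G$ also equals the number of conjugacy classes of $G$: this follows from Wedderburn's theorem, which decomposes $\mathbb{C}[G]=\bigoplus_\alpha M_{d_\alpha}(\mathbb{C})$, since the centre of $\mathbb{C}[G]$ has dimension equal to the number of matrix blocks (which is the number of inequivalent irreducibles) on the one hand, and is spanned by the class sums on the other (which is the number of conjugacy classes). Having produced in (ii) a family of $p(n)$ pairwise-orthonormal class functions in a space of dimension $p(n)$, they must form an orthonormal basis.

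The only step requiring any real machinery is the completeness argument (iii), which needs either Wedderburn's theorem or, alternatively for $S_n$ specifically, the explicit construction of Specht modules indexed by partitions of $n$ (which are pairwise non-isomorphic, irreducible, and $p(n)$ in number). Since the paper already references \cite{james-kerber} for background on the representation theory of $S_n$, it is cleanest to simply quote this fact from there rather than reproduce the full derivation.
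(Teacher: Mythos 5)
Your outline is essentially correct, and it is worth noting at the outset that the paper offers no proof of this Fact at all: it is stated as standard background and implicitly deferred to \cite{james-kerber}, which is exactly the course of action you recommend in your final paragraph. So there is nothing to compare against; your sketch is the standard textbook argument. One point deserves sharpening, however. In step (ii) you apply Schur's lemma to \emph{real} irreducible representations and conclude that the averaged intertwiner $\tilde M$ is a scalar when $\alpha=\beta$. Over $\mathbb{R}$, Schur's lemma only gives that $\mathrm{End}(V_\alpha)$ is a finite-dimensional division algebra over $\mathbb{R}$, i.e.\ $\mathbb{R}$, $\mathbb{C}$ or $\mathbb{H}$; the scalar conclusion requires absolute irreducibility. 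Your stated justification --- that every element of $S_n$ is conjugate to its inverse, so the characters are real-valued --- rules out complex type but not quaternionic type (the $2$-dimensional irreducible of $Q_8$ has a real character, yet its real form has $\langle\chi,\chi\rangle=2$, and for such a group the ``real irreducible characters'' would \emph{not} be orthonormal). What saves the argument for $S_n$ is that the Specht modules are defined over $\mathbb{Q}$ and are absolutely irreducible, so every real irreducible representation of $S_n$ is of real type and Schur's lemma does give scalars; this is the same fact you invoke in (iii), so the fix is only a matter of citing it one step earlier. With that adjustment the dimension count in (iii) closes the argument as you describe.
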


We now need some facts about {\em permutation characters}. Let \(\alpha = (\alpha_{1}, \ldots, \alpha_{k})\) be a partiton of \(n\). The \emph{Young diagram} of $\alpha$ is
  an array of $n$ boxes, or `cells', having $k$ left-justified rows, where row $i$
  contains $\alpha_i$ cells. For example, the Young diagram of the partition \((3,2,2)\) is:
  \[
\yng(3,2,2)
\]

If the array contains the numbers \(\{1,2,\ldots,n\}\) inside the cells, we call it an \(\alpha\)-\emph{tableau}, or a {\em tableau of shape \(\alpha\)}; for example,
\[
\young(617,54,32)
\]
is a \((3,2,2)\)-tableau. Two \(\alpha\)-tableaux are said to be \emph{row-equivalent} if they have the same numbers in each row.

An \(\alpha\)-\emph{tabloid} is an \(\alpha\)-tableau with unordered row entries (or, more formally, a row-equivalence class of \(\alpha\)-tableaux). For example, the \((3,2,2)\)-tableau above corresponds to the following \((3,2,2)\)-tabloid:
$$\begin{array}{ccccc} \{&1&6&7&\}\\ \{&4&5&&\}\\ \{&2&3&&\}\end{array}$$
where each row is a set, not a sequence. Consider the natural left action of \(S_{n}\) on the set \(X^{\alpha}\) of all \(\alpha\)-tabloids. For example, the permutation $(1,5)(2,6,4)(3)(7)$ (written in disjoint cycle notation) acts on the tabloid above as follows:
$$(1,5)(2,6,4)(3,7) \left(\begin{array}{ccccc} \{&1&6&7&\}\\ \{&4&5&&\}\\ \{&2&3&&\}\end{array}\right) = \begin{array}{ccccc} \{&3&4&5&\}\\ \{&1&2&&\}\\ \{&6&7&&\}\end{array}$$
Let \(M^{\alpha} = \mathbb{R}[X^{\alpha}]\) be the corresponding permutation representation, i.e. the real vector space with basis \(X^{\alpha}\) and \(S_{n}\) action given by extending linearly. We write $\xi_{\alpha}$ for the character of this representation. The $\{\xi_{\alpha}\}_{\alpha \vdash n}$ are called the {\em permutation characters} of $S_n$. If $\sigma \in S_n$, then $\xi_{\alpha}(\sigma)$ is simply the number of $\alpha$-tabloids fixed by $\sigma$. 

We can express the irreducible characters in terms of the permutation characters using the {\em determinantal formula}: for any partition \(\alpha\) of \(n\),
\begin{equation}\label{eq:determinantalformula} \chi_{\alpha} = \sum_{\pi \in S_{n}} \sgn(\pi) \xi_{\alpha - \textrm{id}+\pi}.\end{equation}
Here, if \(\alpha = (\alpha_{1},\alpha_{2},\ldots,\alpha_{l})\), then \(\alpha - \textrm{id}+\pi\) is defined to be the sequence
\[(\alpha_{1}-1+\pi(1),\alpha_{2}-2+\pi(2),\ldots,\alpha_{l}-l+\pi(l)).\]
If this sequence has all its entries non-negative, then we let \(\overline{\alpha-\textrm{id}+\pi}\) be the partition of \(n\) obtained by reordering its entries, and we define \(\xi_{\alpha - \textrm{id}+\pi} = \xi_{\overline{\alpha-\textrm{id}+\pi}}\). If the sequence has a negative entry, then we define \(\xi_{\alpha - \textrm{id}+\pi} = 0\). It is easy to see that if \(\xi_{\beta}\) appears on the right-hand side of (\ref{eq:determinantalformula}), then \(\beta \unrhd \alpha\), so the determinantal formula expresses \(\chi_{\alpha}\) in terms of \(\{\xi_{\beta}: \ \beta \unrhd \alpha\}\). We may rewrite the determinantal formula as
\begin{equation}\label{eq:determinantalformula2} \chi_{\alpha} = \sum_{\beta \unrhd \alpha} c_{\alpha \beta} \xi_{\beta},\end{equation}
where $c_{\alpha \beta} \in \mathbb{Z}$ for each $\beta \unrhd \alpha$.

We need the following.
\begin{lemma}
\label{lemma:determinantal-bound}
Let $u \in \mathbb{N}$, and let $\alpha$ be a partition of $n$ with $\alpha_1=n-u$. Then
$$\sum_{\beta \unrhd \alpha}|c_{\alpha \beta}| \leq (u+1)!.$$
\end{lemma}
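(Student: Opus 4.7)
The plan is to bound $\sum_{\beta \unrhd \alpha} |c_{\alpha\beta}|$ directly by counting the nonzero terms in the determinantal formula (\ref{eq:determinantalformula}), rather than attempting to identify the individual coefficients. The key observation is that the hypothesis $\alpha_1 = n-u$ forces $\alpha$ to have short length: writing $\alpha = (\alpha_1, \alpha_2, \ldots, \alpha_l)$, the parts $\alpha_2, \ldots, \alpha_l$ are positive integers summing to $u$, so $l - 1 \leq u$, i.e.\ $l \leq u+1$.

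Next, I would argue that only permutations $\pi \in S_n$ fixing $\{l+1, l+2, \ldots, n\}$ pointwise can contribute a nonzero term to (\ref{eq:determinantalformula}). Padding $\alpha$ by zeros up to length $n$, the $i$th entry of $\alpha - \textrm{id} + \pi$ for $i > l$ is $\pi(i) - i$, which is non-negative only if $\pi(i) \geq i$; combined with bijectivity of $\pi$, a descending induction on $i$ forces $\pi(i) = i$ for all $i > l$. Hence there are at most $l!$ nonzero summands, each of the form $\pm \xi_\beta$ for some partition $\beta \unrhd \alpha$. Regrouping these into $\sum_\beta c_{\alpha\beta}\xi_\beta$ and applying the triangle inequality then yields
\[
\sum_{\beta \unrhd \alpha} |c_{\alpha \beta}| \;\leq\; l! \;\leq\; (u+1)!,
\]
as required.

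There is no serious obstacle here: the whole argument is a counting bound on the number of nonzero terms in (\ref{eq:determinantalformula}), coupled with the elementary length bound $l \leq u+1$. The only subtle point is that distinct permutations $\pi$ may produce the same partition $\beta$, so each $c_{\alpha\beta}$ is a priori a signed sum of several $\pm 1$'s; this is precisely what the triangle inequality accommodates, and no further cancellation has to be tracked.
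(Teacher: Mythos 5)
Your proof is correct and is essentially the paper's own argument: both reduce to showing that any $\pi$ contributing a nonzero term in the determinantal formula must fix $\{u+2,\ldots,n\}$ pointwise (the paper works with indices $i\geq u+2$ directly, you with $i>l$ where $l\leq u+1$ is the length of $\alpha$), and then bound the sum of $|c_{\alpha\beta}|$ by the number of such permutations via the triangle inequality. No issues.
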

\begin{proof}
Fix an integer $i \geq u+2$. Note that $\alpha_{i} = 0$. If $\pi \in S_n$ with $\pi(i) < i$, then $\alpha(i)-i+\pi(i) < 0$, so $\xi_{\alpha-\textrm{id}+\pi} = 0$. Hence, for any permutation $\pi \in S_n$ such that $\xi_{\alpha-\textrm{id}+\pi} \neq 0$, we must have $\pi(i) \geq i$ for all $i \in \{u+2,u+3,\ldots,n\}$, so $\pi(i) = i$ for all $i \in \{u+2,u+3,\ldots,n\}$, i.e. $\pi \in S_{[u+1]}$. This proves the lemma.
\end{proof}

Observe that if $u \leq n/2$, then the number of partitions $\alpha$ of $n$ with $\alpha_1=n-u$ is precisely $p(u)$. Hence, if $t \leq n/2+1$, then the number of partitions $\alpha$ of $n$ with $\alpha_1 \geq n-t+1$ is precisely $\sum_{i=0}^{t-1} p(i).$ We will use the following crude bound on this quantity.

\begin{lemma}
\label{lemma:sum-bound}
If $t \in \mathbb{N}$, then
$$\sum_{i=0}^{t-1} p(i) \leq t!.$$
\end{lemma}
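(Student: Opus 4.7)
The plan is to prove the bound via a simple combinatorial injection followed by an elementary inequality. Specifically, I will inject $\bigsqcup_{i=0}^{t-1}\{\text{partitions of }i\}$ into the set of \emph{compositions} of $t$ (ordered tuples of positive integers with sum $t$), of which there are exactly $2^{t-1}$, and then observe that $2^{t-1}\leq t!$.

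The injection is as follows: for a partition $\alpha=(\alpha_1,\ldots,\alpha_k)$ of $i\in\{0,1,\ldots,t-1\}$ (with $i=0$ corresponding to the empty partition), define
$$\phi(\alpha) := (t-i,\,\alpha_1,\,\alpha_2,\,\ldots,\,\alpha_k).$$
This is an ordered tuple of positive integers summing to $t$, since $t-i\geq 1$ and each $\alpha_j\geq 1$. It is injective: from $\phi(\alpha)$, the first entry $t-i$ determines $i$ (hence the fibre that $\alpha$ came from), and the remaining entries recover $\alpha$.

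Combining this injection with the standard count of compositions of $t$ gives $\sum_{i=0}^{t-1}p(i)\leq 2^{t-1}$. Finally, $2^{t-1}\leq t!$ for all $t\in\mathbb{N}$, by an easy induction: the base case $t=1$ yields $1\leq 1$, and for the inductive step, $2^{t}=2\cdot 2^{t-1}\leq 2\cdot t!\leq (t+1)\cdot t!=(t+1)!$. Chaining the two inequalities completes the proof.

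There is no substantive obstacle here; the bound in the lemma is highly non-tight (indeed $p(n)$ grows only sub-exponentially, by Hardy--Ramanujan), and alternative elementary approaches --- for instance, a direct induction on $t$ that uses the compositions bound $p(t)\leq 2^{t-1}$ and the gap $(t+1)!-t!=t\cdot t!$ --- would yield the same conclusion with equal ease.
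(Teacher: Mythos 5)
Your proof is correct, but it takes a genuinely different route from the paper's. The paper bounds each summand individually via $p(i)\leq i!$ (since $p(i)$ is the number of conjugacy classes of $S_i$, which is at most $|S_i|=i!$) and then uses $\sum_{i=0}^{t-1}i!\leq t\cdot(t-1)!=t!$. You instead bound the whole sum at once by injecting $\bigsqcup_{i=0}^{t-1}\{\text{partitions of }i\}$ into the $2^{t-1}$ compositions of $t$ via $\alpha\mapsto(t-i,\alpha_1,\ldots,\alpha_k)$, and then finish with $2^{t-1}\leq t!$. Your injection is valid (the first entry of the image recovers $i$, so injectivity holds across the disjoint union, and the $i=0$ case correctly lands on the one-part composition $(t)$), and the final induction is sound. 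What your approach buys is the sharper intermediate bound $\sum_{i=0}^{t-1}p(i)\leq 2^{t-1}$, which is exponentially better than $t!$ and closer to the true subexponential growth; the paper's argument is marginally shorter and leans on a group-theoretic interpretation of $p(i)$ already natural in context. Since the lemma is only ever invoked with the crude $t!$ bound (it gets absorbed into $(t_p!)^2\leq t_p^{2t_p}$ in the main estimate), either route serves equally well.
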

\begin{proof}
We have $p(i) \leq i!$ for all $i \in \mathbb{N} \cup \{0\}$, since $p(i)$ is the number of conjugacy-classes in the symmetric group $S_i$. Hence,
$$\sum_{i=0}^{t-1}p(i) \leq \sum_{i=0}^{t-1}i! \leq t!.$$
\end{proof}

\section{Proof of Theorem \ref{thm:main}}
Let $A \subset S_n$ be a conjugation-invariant set with $0 < |A|=pn! \leq n!/2$. Note that by choosing $c>0$ small enough, we may assume that $p \leq \epsilon_0$, for any absolute constant $\epsilon_0$. Indeed, if $\epsilon_0 \leq p \leq 1/2$, then by Corollary \ref{corr:diaconis}, we have
$$|\partial A| \geq \tfrac{1}{2} n \cdot |A| \geq c \cdot \frac {\log \left( \tfrac{1}{\epsilon_0} \right)
    }{\log \log \left( \tfrac{2}{\epsilon_0} \right)}\cdot n \cdot |A| \geq c \cdot \frac {\log \left( \tfrac{1}{p} \right)
    }{\log \log \left( \tfrac{2}{p} \right)}\cdot n \cdot |A|,$$
provided $c$ is chosen to be sufficiently small depending on $\epsilon_0$.

From (\ref{eq:eval-expansion}) applied to $T_n$, we have
\begin{equation} \label{eq:partial} |\partial A|=n!\la 1_A,L_n 1_A\ra.\end{equation}
Since the irreducible characters of $S_n$ over $\mathbb{R}$ are an orthonormal basis for the space of real-valued class functions on $S_n$, we may write
\begin{equation}\label{eq:linear-combination} 1_{A} = \sum_{\alpha \vdash n} w_{\alpha} \chi_{\alpha},\end{equation}
where $w_{\alpha} = \langle 1_{A},\chi_{\alpha} \rangle \in \mathbb{R}$ for each $\alpha \vdash n$. Since $\chi_{\alpha}$ is an eigenvector of $L_n$ with eigenvalue $\mu_{\alpha}$, substituting this into (\ref{eq:partial}) gives
\begin{equation}\label{eq:exact} |\partial A|=n!\sum_{\alpha \vdash n} \mu_\alpha w_\alpha^2.\end{equation}
Now define $K=K(p)$ by
\begin{equation}\label{eq:K-defn} K^{2K}=\frac 1{p}.\end{equation}
We pause to note simple lower and upper bounds on $K$. Taking the logarithm of both sides of (\ref{eq:K-defn}) gives:
  \[
  2K\log K = \log \tfrac 1p \leq \log \tfrac 2p.
  \]
  Thus $K< \log \tfrac 2p$, which implies $\log K < \log\log \frac
  2p$. Therefore:
  $$
  K = \frac {\log \tfrac 1p}{2\log K} \ge \frac {\log {\frac 1p}
  }{2\log \log \frac 2p}.
  $$

On the other hand, since $A \neq \emptyset$, we have $p \geq 1/n!$, so
$$K^{2K} \leq n! \leq n^n,$$
and therefore $K \leq n$. Putting these two bounds together, we have
\begin{equation} \label{eq:lower-bound-K}
 \frac {\log {\frac 1p}
  }{2\log \log \frac 2p} \leq K \leq n.
  \end{equation}

 Let $M$ be a large, fixed integer. (For concreteness, we may take $M=18$.) Define $t_p = \lfloor K/M \rfloor$; note that
 \begin{equation} \label{eq:t-bound} 
 \frac {\log {\frac 1p}
  }{4M\log \log \frac 2p} \leq t_p \leq \frac{n}{M}.\end{equation}
  
 We need the following bound on $|w_{\alpha}|$ for $\alpha_1 > n-t_p$.
\begin{proposition}
\label{prop:w-bound}
Let $\alpha \vdash n$ with $\alpha_1 = n-t$, where $t < t_p$. Then
$$|w_{\alpha}| \le \frac 1{K^{2K(1-\frac 8M)}}.$$
\end{proposition}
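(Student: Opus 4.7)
The plan is to expand $w_\alpha$ via the determinantal formula, then exploit the conjugation-invariance of $A$ to rewrite each coefficient $\langle 1_A, \xi_\beta\rangle$ as a density in a Young subgroup, and finally combine these estimates using Lemma~\ref{lemma:determinantal-bound} and the definition of $K$.

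First, I would apply (\ref{eq:determinantalformula2}) to obtain
\[
w_\alpha = \langle 1_A, \chi_\alpha \rangle = \sum_{\beta \unrhd \alpha} c_{\alpha \beta} \langle 1_A, \xi_\beta \rangle.
\]
The point of this expansion is that since $\alpha_1 = n-t$, every partition $\beta$ appearing in the sum satisfies $\beta_1 \geq n-t$, so every $S_\beta$ is a ``large'' Young subgroup.

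Next, I would use conjugation-invariance of $A$. Since $\xi_\beta(\sigma)$ counts the $\beta$-tabloids fixed by $\sigma$, a swap of summation gives
\[
\langle 1_A, \xi_\beta \rangle \;=\; \frac{1}{n!}\sum_{T \in X^\beta} |A \cap \mathrm{Stab}(T)|.
\]
Because $S_n$ acts transitively on $X^\beta$ and $A$ is conjugation-invariant, $|A \cap \mathrm{Stab}(T)|$ is the same for every $T \in X^\beta$; choosing one such $T$ with stabiliser the standard Young subgroup $S_\beta$, this reduces to
$\langle 1_A, \xi_\beta \rangle = |A \cap S_\beta|/|S_\beta|$.

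The heart of the proof is then to bound this subgroup density. Trivially, $|A \cap S_\beta|/|S_\beta| \leq \min\!\bigl(1,\, pn!/|S_\beta|\bigr)$, and since $|S_\beta| = \prod_i \beta_i!$ with $\beta_1 \geq n-t$, one has $|S_\beta| \geq (n-t)!$ and more precisely $|S_\beta| \geq (n-t-1+\pi(1))!$ for the specific $\beta = \overline{\alpha - \mathrm{id} + \pi}$ arising from $\pi \in S_{[t+1]}$ in (\ref{eq:determinantalformula}). This allows the sum to be organised by the value of $\pi(1)$: for each $j \in [t+1]$ there are at most $t!$ contributing permutations, each producing a term of size at most $\min\!\bigl(1,\, p \cdot n!/(n-t-1+j)!\bigr) \leq \min(1, p n^{t+1-j})$. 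Summing geometrically and applying Lemma~\ref{lemma:determinantal-bound} (which bounds the total $\sum|c_{\alpha\beta}|$ by $(t+1)!$) yields an estimate of the shape $|w_\alpha| \lesssim (t+1)! \cdot \max\!\bigl(p\, n^t,\; 1\bigr)\cdot(\text{logarithmic factors})$.

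The final step is the arithmetic: verifying that, under the hypothesis $t < t_p = \lfloor K/M \rfloor$ together with $K^{2K} = 1/p$ and $K \leq n$, the above product collapses to $K^{-2K(1-8/M)}$. Here one invokes Stirling-type bounds: $(t+1)! \leq K^{O(K/M)}$ and $n^t \leq K^{O(K/M)\cdot \log n/\log K}$, so the ``loss'' above $p = K^{-2K}$ is confined to an exponent of order $K/M$, leaving the target exponent $2K(1-8/M)$ with room to spare once $M$ is taken sufficiently large (hence the concrete choice $M=18$). The main obstacle I anticipate is precisely this bookkeeping: the factor $n^t$ is not tame in terms of $K$ alone, and handling the regime $n \gg K$ requires using the $\min$ with $1$ carefully---splitting the range of $j$ into the two regimes $p n^{t+1-j} \leq 1$ and $p n^{t+1-j} > 1$ and checking each separately---rather than blindly applying $\min(1,x)\leq x$.
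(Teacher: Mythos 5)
Your opening moves match the paper's: expand $w_\alpha$ via the determinantal formula and control $\sum_{\beta}|c_{\alpha\beta}|$ by Lemma \ref{lemma:determinantal-bound}. The identity $\langle 1_A,\xi_\beta\rangle = |A\cap S_\beta|/|S_\beta|$ for conjugation-invariant $A$ is also correct, and is essentially equivalent to the paper's step of collapsing each $\beta$-tabloid to a two-row tabloid via $\xi_\beta(\sigma)\le (n-\beta_1)!\,\xi_{(\beta_1,n-\beta_1)}(\sigma)$. But there is a genuine gap at what you yourself call the heart of the proof: the bound $|A\cap S_\beta|/|S_\beta|\le\min\bigl(1,\,pn!/|S_\beta|\bigr)$ is far too weak, and the regime-splitting you propose at the end cannot rescue it, because in the bad regime the $\min$ simply returns $1$. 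Concretely, take $A=A_s$ with $s$ a large fixed constant (so $p\approx 1/s!$ is a small constant and $t_p\ge 2$) and let $n\to\infty$; for $\beta=(n-1,1)$ one has $pn!/|S_\beta|=pn\to\infty$, so your bound on that term is $1$ and your final estimate degenerates to $|w_\alpha|\lesssim (t+1)!$. The proposition demands $|w_\alpha|\le K^{-2K(1-8/M)}=p^{1-8/M}=p^{5/9}$ for $M=18$ --- a bound so strong that even Cauchy--Schwarz ($|w_\alpha|\le\|1_A\|\,\|\chi_\alpha\|=\sqrt{p}$) does not reach it. So the estimate genuinely requires more than $|A|=pn!$ together with a count of $|S_\beta|$.

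The missing ingredient is precisely the paper's Lemma \ref{lemma:main}: for \emph{any} $A$ of measure $p$ (conjugation-invariance is not even needed at this stage) and any $s<t_p$, one has $\tfrac1{n!}\sum_{\sigma\in A}\xi_{(n-s,s)}(\sigma)\le K^{-2K(1-\frac7M)}$ --- which in your notation is exactly the density $|A\cap S_{(n-s,s)}|/|S_{(n-s,s)}|$. Proving this is the bulk of the paper's work: one bounds $\xi_{(n-s,s)}(\sigma)\le s^{s-1}\sum_{i\le s}C_i(\sigma)^s$ in terms of the cycle counts $C_i(\sigma)$, and then uses Goncharov's formula $|D_{n,i,j}|\approx n!/(i^jj!)$ for the number of permutations with exactly $j$ cycles of length $i$ to show that the worst-case $A$ is essentially $\{\sigma:\ C_i(\sigma)\ge k\}$ for an appropriate threshold $k\approx K$, after which the relevant sum is a controlled tail $\sum_{j\ge k}j^s/(i^jj!)$. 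You would need to supply this argument (or an equivalent one); the trivial subgroup-density bound cannot stand in for it.
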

\begin{proof}
By (\ref{eq:linear-combination}) and the orthonormality of $\{\chi_{\alpha}\}_{\alpha \vdash n}$, we have
\begin{equation}\label{eq:expansion} w_{\alpha} = \la 1_{A},\chi_{\alpha} \ra = \frac{1}{n!}\sum_{\sigma \in A} \chi_{\alpha}(\sigma)\end{equation}
By (\ref{eq:determinantalformula2}), we have
\begin{equation}\label{eqn:boundchi}
|\chi_\alpha(\sigma)| \le \sum_{\beta\unrhd\alpha}
|c_{\alpha\beta}|\xi_\beta(\sigma).
\end{equation}
For each tabloid $T$ of shape $\beta=(\beta_1,\dots,\beta_n)$ let $\tilde T$ be the tabloid of shape $\tilde \beta :=
(\beta_1,n-\beta_1)$ obtained by collapsing all the rows
other than the first one into a single row of length $n-\beta_1 =
\sum_{i=2}^n\beta_i$. We observe that for every permutation
$\sigma\in S_n$, if $\sigma$ fixes the tabloid $T$, then it also fixes
the tabloid $\tilde T$. Moreover, the mapping $T\mapsto \tilde T$ is a
surjection, and is at most $(n-\beta_1)!$ to $1$. Recalling that $\xi_{\beta}(\sigma)$ is the number of $\beta$-tabloids fixed by $\sigma$, we obtain
\[
\xi_\beta(\sigma) \le (n-\beta_1)!\cdot \xi_{\tilde\beta} (\sigma).
\]
Substituting this into \eqref{eqn:boundchi} yields
\begin{equation}\label{eq:mod}
|\chi_\alpha(\sigma)| \le \sum_{\beta\unrhd\alpha} |c_{\alpha\beta}|
\cdot (n-\beta_1)!\cdot \xi_{\tilde \beta} (\sigma).
\end{equation}
Substituting this bound into (\ref{eq:expansion}), we obtain
\[
|w_\alpha| \le \sum_{\beta \unrhd \alpha}|c_{\alpha\beta}|
(n-\beta_1)!\cdot\frac 1{n!}\sum_{\sigma\in A} \xi_{\tilde \beta} (\sigma).
\]

We need the following lemma (which we
will prove in Section \ref{sec:lemmaproof}).
\begin{lemma} \label{lemma:main} Let $A\subset S_n$ be a family of
  permutations with $|A|=p n!$ and let $s \in \mathbb{N}$ with $s<t_p$. Then
  \begin{equation*}
    \tfrac 1{n!} \sum_{\sigma\in A}\xi_{(n-s,s)}(\sigma) \le
    \frac 1{K^{2K(1-\frac 7M)}}.
  \end{equation*}
\end{lemma}

Now define $s(\beta) = n-\beta_1$. Notice
that $\beta \unrhd \alpha$ implies $s\le t$, so from (\ref{eq:mod}) and Lemma \ref{lemma:main} we obtain
\begin{align*}
  |w_\alpha| &\le \sum_{\beta\unrhd\alpha} |c_{\alpha\beta}| \cdot
  s!\cdot \frac 1{n!}\sum_{\sigma\in A}\xi_{(n-s,s)} (\sigma) \\ &\le
  \sum_{\beta\unrhd\alpha} |c_{\alpha\beta}| \cdot s!\cdot \frac
  1{K^{2K(1-\frac 7M)}} \\
  &\le \frac {t_p!}{K^{2K(1-\frac 7M)}} \cdot \sum_{\beta\unrhd\alpha}
  |c_{\alpha\beta}|.
\end{align*}

By Lemma \ref{lemma:determinantal-bound}, we have
\[
\sum_{\beta\unrhd\alpha} |c_{\alpha\beta}| \le t_p!,
\]
and therefore
\[
|w_\alpha| \le \frac {(t_p!)^2}{K^{2K(1-\frac 7M)}} \le  \frac
{t_p^{2t_p}}{K^{2K(1-\frac 7M)}} \le \frac 1{K^{2K(1-\frac 8M)}},
\]
proving Proposition \ref{prop:w-bound}.
\end{proof}
\begin{comment} Let $\alpha^*=(n-t_p,t_p)$.\end{comment}
Using (\ref{eq:exact}), Corollary \ref{cor:cormu}, Lemma \ref{lemma:sum-bound} and Proposition \ref{prop:w-bound}, we obtain:
\begin{align*}
  |\partial A| & = n! \sum_{\alpha \vdash n}\mu_\alpha w_\alpha^2\\
  & \geq n! \sum_{\alpha_1 \leq n-t_p}\mu_\alpha w_\alpha^2\\
  & \ge n!\cdot\mu_{(n-t_p,t_p)}\cdot\sum_{\alpha_1 \leq n-t_p} w_\alpha^2
  \\
  &= n!\cdot(n\cdot t_p - (t_p)^2+t_p)\cdot \left(\|1_A\|^2 -
    \sum_{\alpha_1 > n-t_p} w_\alpha^2 \right) \\
  &\ge n!\cdot(n\cdot t_p- t_p^2)\cdot\left(p - \left|\{\alpha\vdash
      n : \alpha_1 > n-t_p\}\right| \cdot \left(\max_{\alpha_1 > n-t_p}|w_{\alpha}|\right)^2\right) \\
  &\ge n!\cdot(n\cdot t_p- t_p^2)\cdot\left(p - t_p! \cdot
    \left(\frac 1{K^{2K(1-\frac 8M)}}\right)^2\right) \\
  & \ge n! \cdot (n\cdot t_p-t_p^2)\cdot\left(p - \frac {{K}^{\frac
      KM}}{K^{2K(2-\frac {16}M)}}\right) \\
    & \ge n! \cdot (n\cdot t_p-t_p^2)\cdot \left(p - \frac {1}{K^{2K(2-\frac {17}M)}}\right) \\
  & \ge n! \cdot (n\cdot t_p-t_p^2)\cdot (p -
  p^{2-\frac {17}{M}})
\end{align*}
Taking $M=18$, and using (\ref{eq:t-bound}), we have $t_p \leq n/M = n/18$. Hence,
\begin{align*}
  |\partial A| &\geq n! \cdot (n\cdot t_p-t_p^2)\cdot p(1-p^{1/18}) \geq c_0 \cdot t_p \cdot n \cdot |A|,
\end{align*}
for some absolute constant $c_0 >0$.

Using (\ref{eq:t-bound}), it follows that
$$|\partial A| \geq c \cdot \frac {\log \left( \tfrac 1{p} \right)
    }{\log \log \left( \tfrac 2{p} \right)}\cdot n \cdot |A|$$
    for some absolute constant $c>0$, proving Theorem \ref{thm:main}.

\section{Proof of Lemma \ref{lemma:main}} \label{sec:lemmaproof}
Let $\sigma \in S_n$. For $i \in [n]$, let
$C_i(\sigma)$ denote the number of cycles in $\sigma$ of length
$i$. Since $\xi_\alpha(\sigma)$ is the number of tabloids of shape
$\alpha$ fixed by $\sigma$, it follows that $\xi_{(n-s,s)}(\sigma)$ is the number of
subsets of $[n]$ of size $s$ that are fixed by
$\sigma$. Observe that if a set $S \subset [n]$ with $|S|=s$
is fixed by $\sigma$, then $S$ is a union of at most $s$ cycles of $\sigma$, all of which have length at most $s$. Therefore,
\begin{equation} \label{eq:sum_i}
  \xi_{(n-s,s)}(\sigma) \le \left( \sum_{i=1}^s
    C_i(\sigma)\right)^s
    \leq s^{s-1}\cdot \sum_{i=1}^s (C_i(\sigma))^s,
\end{equation}
using Jensen's inequality.

Hence,
\begin{align*}
  \tfrac 1{n!} \sum_{\sigma\in A}\xi_{(n-s,s)}(\sigma)
  \le \frac{ s^{s-1} }{n!} \sum_{i=1}^s \sum_{\sigma \in A} \left(
    C_i(\sigma) \right)^s.
\end{align*}

For $i \in [n]$ and $1\le j \le \lfloor \frac ni \rfloor$, we define
\[
D_{n,i,j}=\{ \sigma\in S_n:C_i(\sigma)=j\}.
\]

It was shown in \cite{goncharov} (cf. \cite{arratia}) that
\begin{align*}
  |D_{n,i,j}| = \frac{n!i^{-j}}{j!}\sum_{l=0}^{\lfloor n/i\rfloor -j}
  (-1)^l\frac{i^{-l}}{l!},
\end{align*}
which implies
\begin{align}\label{eq:bound_D_size}
  \tfrac 13 \cdot \frac {n!}{i^jj!} \le |D_{n,i,j}| \le \frac{n!}{i^{j}j!},
\end{align}
unless $i=1$ and $j=n-1$, in which case $|D_{n,i,j}|=0$.

Fix a specific $i \in [s]$, and let $\kappa = \kappa_p(i) \in \mathbb{R}$ be such
that
\begin{align}\label{eq:defk}
i^{\kappa}\cdot \kappa^{\kappa} = \frac{1}{p}.
\end{align}
Define $k=k_p(i)=\lfloor \kappa_p(i) \rfloor$. Clearly, for fixed $p$,
$k_p(i)$ is monotone non-increasing in $i$, and therefore $k_p(i) \geq K-1$ for all
$1\le i \le s$.

Define
$$D_{n,i,\ge k} := \bigcup_{j=k}^{\lfloor \frac ni \rfloor} D_{n,i,j}.$$
From the left-hand side of
\eqref{eq:bound_D_size}, it follows that
\begin{align} \label{eq:d_greater_a}
|D_{n,i,\ge k}| \ge |D_{n,i,k}| \ge \tfrac 13
\frac {n!}{i^kk!} \ge  \frac {n!}{i^k k^k}  \ge \frac {n!}{i^{\kappa} \kappa^{\kappa}} = |A|,
\end{align}
using the fact that $n-2 \geq k \geq K-1 \geq 3$. Hence,
\begin{align*}
  \frac{ s^{s-1} }{n!}\sum_{\sigma \in A} \left(C_i(\sigma)
    \right)^s
  &\le \frac{ s^{s-1} }{n!}\left( \sum_{j=k}^{\lfloor \frac ni \rfloor}|D_{n,i,j}\cap A|\cdot j^s +
  |A\setminus D_{n,i,\ge k}|\cdot k^s \right).
\end{align*}
Equation \eqref{eq:d_greater_a} implies that $|A \setminus D_{n,i,\ge
  k}| \le |D_{n,i,\ge k}\setminus A|$. Hence,
\begin{align*}
  \frac{ s^{s-1} }{n!}\sum_{\sigma \in A} \left(C_i(\sigma) \right)^s
  & \le \frac{ s^{s-1} }{n!}\left( \sum_{j=k}^{\lfloor \frac ni
      \rfloor}|D_{n,i,j}\cap A|\cdot j^s + |D_{n,i,\ge k} \setminus
    A|\cdot k^s \right)\\
    & \le \frac{ s^{s-1} }{n!}\left(
    \sum_{j=k}^{\lfloor \frac ni \rfloor}|D_{n,i,j}|\cdot j^s \right).
\end{align*}
Using the right-hand inequality of \eqref{eq:bound_D_size}, we obtain
\begin{align*}
  \frac{ s^{s-1} }{n!}\sum_{\sigma \in A} \left(C_i(\sigma) \right)^s
  \le s^{s-1} \sum_{j=k}^{\lfloor \frac ni \rfloor} \frac
  {j^s}{i^jj!} \le \frac {s^{s-1}}{i^k} \sum_{j=k}^{\lfloor \frac ni \rfloor} \frac
  {j^s}{j!} \le \frac {s^{s-1}}{i^k} \sum_{j=k}^{\infty} \frac
  {j^s}{j!}.
\end{align*}
To bound the latter sum, we make the following claim.
\begin{claim} \label{claim:uglyfactorial} For any $s,k\in \N$
  such that $Ms\le k $ and $e^M\le k$, we have
  \[
  \sum_{j=k}^\infty \frac {j^s}{j!} \le \frac 1{k^{(1-\frac 3M)k}}.
  \]
\end{claim}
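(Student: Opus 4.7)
The plan is to exploit the rapid decay of $j^s/j!$ for $j$ well above $s$ to reduce the sum to (a constant times) its leading term, and then bound that leading term via Stirling.

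First, I would show the series is geometrically decreasing. Compute the ratio of consecutive terms:
\[
\frac{(j+1)^s / (j+1)!}{j^s / j!} \; = \; \frac{(1+1/j)^s}{j+1}.
\]
For $j \ge k$ the hypotheses give $s/j \le s/k \le 1/M$ and $j+1 \ge k+1 \ge e^M$, so the ratio is at most $e^{1/M}/e^M = e^{1/M-M}$. For $M\ge 2$ this is comfortably below $1/2$, so
\[
\sum_{j=k}^{\infty} \frac{j^s}{j!} \; \le \; 2 \cdot \frac{k^s}{k!}.
\]

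Second, I would invoke the elementary Stirling bound $k! \ge (k/e)^k$ to rewrite
\[
2 \cdot \frac{k^s}{k!} \; \le \; \frac{2\, e^k}{k^{k-s}}.
\]
It then remains to verify $2 e^k / k^{k-s} \le k^{-(1-3/M)k}$, equivalently $k^{(3/M)k - s} \ge 2 e^k$. Taking natural logarithms, this becomes
\[
\Bigl(\tfrac{3}{M}k - s\Bigr)\ln k \; \ge \; k + \ln 2.
\]
Using $s \le k/M$ to lower-bound the left side by $(2/M)k\ln k$, and $\ln k \ge M$ (from $k \ge e^M$) to conclude $(2/M)k\ln k \ge 2k$, the inequality follows since $2k \ge k + \ln 2$ for $k \ge 1$.

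Combining these two steps proves the claim. The approach is essentially routine; the only care needed is budgeting the $O(1/M)$ slack cleanly. The factor of $2$ from summing the geometric series and the factor of $e^k$ from Stirling are each absorbed into the gap between the exponents $1-3/M$ (in the target) and $1-s/k \ge 1-1/M$ (coming from $k^s/k^k$), using the $e^M \le k$ hypothesis to convert $\ln k \ge M$ into enough slack. No subtler estimate is required, so I do not anticipate a real obstacle beyond bookkeeping the constants so that the exponent $1-3/M$ (rather than something weaker) emerges.
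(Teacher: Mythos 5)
Your proof is correct and follows essentially the same route as the paper's: both rest on Stirling's bound $j! \ge (j/e)^j$ together with the hypotheses $s \le k/M$ and $\ln k \ge M$ to produce the $3/M$ of slack in the exponent. The only difference is in how the tail is summed --- you first collapse it to its leading term via the ratio test (consecutive ratios are at most $e^{1/M-M} < 1/2$), whereas the paper bounds each term by $j^{-(1-3/M)j-2}$ and uses $\sum_{j\ge k} j^{-2} \le 1$; both are valid and the constants work out in each case.
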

\begin{proof}[Proof of Claim \ref{claim:uglyfactorial}]
  Using Stirling's bound $j! \geq (j/e)^j$ (valid for all $j \in \mathbb{N}$), we obtain
  \begin{align*}
    \sum_{j=k}^\infty\frac{j^s}{j!} \le \sum_{j=k}^\infty \frac
    {j^se^j}{j^j}.
  \end{align*}
  The fact that $s\le\frac 1M j$ implies $j^s \le j^{\frac 1Mj}$, whereas
  the fact that $2M<e^M\le k \le j$ implies that $j \geq 2M$ and $e^j\le j^{\frac jM}$. Therefore,
  \[
  \sum_{j=k}^\infty \frac {j^se^j}{j^j} \le \sum_{j=k}^\infty\frac
  1{j^{j-\frac 3Mj+2}} \le \frac 1{k^{(1-\frac 3M)k}}\sum_{j=k}^\infty
  \frac 1{j^2}
   \le \frac 1{k^{(1-\frac 3M)k}},
  \]
  proving the claim.
\end{proof}
Coming back to the proof of Lemma \ref{lemma:main}, note from (\ref{eq:lower-bound-K}) that we have $k \geq \kappa-1 \geq K-1 \geq e^M$ provided $p \leq \epsilon_0$ and $\epsilon_0$ is a sufficiently small absolute constant. Hence, we may apply Claim \ref{claim:uglyfactorial}, giving
\[
\frac{ s^{s-1} }{n!}\sum_{\sigma \in A} \left(C_i(\sigma) \right)^s \le
\frac {s^{s-1}}{i^k} \sum_{j=k}^\infty\frac{j^s}{j!} \le \frac{
  s^{s-1}}{i^k k^{(1-\frac 3M)k}} \le
\frac{s^{s-1}}{i^{\kappa}\kappa^{(1-\frac 6M)\kappa}},
\]
where the last inequality follows from the fact that $k\ge \kappa -1$. Equation \eqref{eq:defk} implies $i^{\kappa}\kappa^{\kappa} = K^{2K}$,
which gives
\[
\frac{ s^{s-1} }{n!}\sum_{\sigma \in A} \left(C_i(\sigma) \right)^s \le
\frac{s^{s-1}}{i^{\kappa}\kappa^{(1-\frac 6M)\kappa}} \le
\frac{s^{s-1}}{i^{(1-\frac 6M)\kappa}\kappa^{(1-\frac 6M)\kappa}} =
\frac {s^{s-1}}{K^{2K(1-\frac 6M)}}.
\]

Plugging this into equation \eqref{eq:sum_i} gives
\[
  \tfrac 1{n!} \sum_{\sigma\in A}\xi_{(n-s,s)}(\sigma)
  \le \frac{ s^{s-1} }{n!} \sum_{i=1}^s \sum_{\sigma \in A} \left(
    C_i(\sigma) \right)^s \le \frac {s^{s}}{K^{2K(1-\frac 6M)}} \le
  \frac 1{K^{2K(1-\frac 7M)}},
\]
proving Lemma \ref{lemma:main}.

\section{Conclusion}
For fixed each pair of positive integers $n,k$ such that there exists a conjugation-invariant subset of $S_n$ with size $k$, define
$$\Xi_n(k) = \min\{|\partial A|:\ A \subset S_n,\ A \textrm{ is conjugation invariant},\ |A|=k\}.$$
We have given a lower bound on $\Xi_n(k)$ which is sharp up to an absolute constant factor. It would be interesting to determine more accurately the behaviour of $\Xi_n(k)$. We make the following conjecture in this regard.
\begin{conjecture}
Let $n,k$ be positive integers such that there exists a conjugation-invariant subset of $S_n$ with size $k$. Let $s = s(n,k) \in \mathbb{N}$ be such that
$$|A_{s}| \leq k \leq |A_{s-1}|,$$
where $A_j$ is defined as in (\ref{eq:fixed-points}) for each $j \in \{0,1,\ldots,n\}$. Then
$$\Xi_n(k) \geq \min\{|\partial (A_{s-1})|,|\partial (A_{s})|\}.$$
\end{conjecture}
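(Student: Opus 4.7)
The starting point is that every conjugation-invariant $A\subseteq S_n$ is a disjoint union of conjugacy classes, $A=\bigsqcup_{\lambda\in\Lambda}C_\lambda$ with $\lambda$ running over a family of cycle types, and that the sets $A_j=\bigsqcup_{\lambda:C_1(\lambda)\geq j}C_\lambda$ are themselves of this form. The conjecture asserts that as $|A|$ varies over $[|A_s|,|A_{s-1}|]$, the edge-boundary never drops below $\min\{|\partial A_s|,|\partial A_{s-1}|\}$. I would combine (i) a class-level exchange argument that compresses a general conjugation-invariant $A$ toward one of the canonical sets $A_s$ or $A_{s-1}$, with (ii) a sharpened spectral analysis verifying that each compression step is boundary-non-increasing.

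\textbf{Compression step.} For (i), given $A$ with $|A_s|\leq|A|\leq|A_{s-1}|$, write $A$ either as $A_s\sqcup B$ (where $B\subseteq S_n\setminus A_s$ is a conjugation-invariant set with $|B|=|A|-|A_s|$, so every cycle type in $B$ has fewer than $s$ fixed points) or as $A_{s-1}\setminus B'$ (where $B'\subseteq A_{s-1}\setminus A_s$ consists of classes of cycle types with exactly $s-1$ fixed points). Iteratively replace ``bad'' classes in $B$ or $B'$ by cardinality-matched packets of classes with more fixed points; the trajectory should terminate with $B=\emptyset$ (so $A=A_s$) or $B'=\emptyset$ (so $A=A_{s-1}$), and a chain of $|\partial A|\geq|\partial A'|$ inequalities along the way yields $|\partial A|\geq\min\{|\partial A_s|,|\partial A_{s-1}|\}$.

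\textbf{Spectral check.} For (ii), use the expansion $|\partial A|=n!\sum_{\alpha\vdash n}\mu_\alpha w_\alpha^2$ from (\ref{eq:exact}). By Corollary~\ref{cor:cormu}, characters $\chi_\alpha$ with $\alpha_1\leq n-s$ carry eigenvalues at least $\mu_{(n-s,s)}=sn-s^2+s$, while those with $\alpha_1>n-s$ carry strictly smaller eigenvalues. The sets $A_s$ and $A_{s-1}$, being the ``simplest'' conjugation-invariant sets of their sizes (their defining conditions are single inequalities on $C_1$), should concentrate maximal $\ell^2$-mass on the low-eigenvalue eigenspaces; proving this quantitatively is exactly the weight-transfer one needs to verify the monotonicity of each compression step. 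The analysis also needs inputs on how a transposition $(i,j)$ acts on a given cycle type $\lambda$ — splitting a cycle when $i,j$ lie in the same cycle of $\sigma\in C_\lambda$, and merging two cycles otherwise — in order to convert packet-level moves into bookkeeping on boundary counts.

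\textbf{Main obstacle.} Two difficulties stand out. First, conjugacy class sizes in $S_n$ vary from $1$ for the identity to $n!/n$ for a single $n$-cycle, so size-preserving swaps are forced to use multi-class packets, and matching totals across packets of differing cycle-type profiles is a non-trivial combinatorial constraint. Second, and more fundamentally, the spectral bounds of Proposition~\ref{prop:w-bound} and Lemma~\ref{lemma:main} are tight only up to an absolute constant; a sharp version of the conjecture demands estimates tight up to a $1+o(1)$ factor, so that one can distinguish $A_s$ from slight perturbations of it by a small number of small classes. Without this sharpening one cannot rule out exotic unions of small conjugacy classes doing marginally better than $A_s$ or $A_{s-1}$, and I would expect establishing the necessary $1+o(1)$-tight spectral inequality — likely via a more careful character-theoretic analysis of the dominant partition $(n-s,s)$ and its neighbours in the dominance order — to be where the bulk of the real work lies.
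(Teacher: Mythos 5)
This statement is an open conjecture: the paper offers no proof of it and explicitly remarks, immediately after stating it, that ``at present, our methods do not seem capable of proving such an exact result.'' Your submission is accordingly a research plan rather than a proof, and as a proof it has genuine gaps at both of its stages. The most concrete error is in the compression step: a general conjugation-invariant set $A$ with $|A_s|\leq|A|\leq|A_{s-1}|$ need not contain $A_s$, nor be contained in $A_{s-1}$ (for example, $A$ could be a union of large fixed-point-free classes, such as the class of $n$-cycles together with other derangement classes), so the decompositions $A=A_s\sqcup B$ and $A=A_{s-1}\setminus B'$ on which the whole exchange argument rests are simply unavailable. Even granting some decomposition, you define no explicit exchange operation, give no argument that any individual swap is boundary-non-increasing, and give no termination argument; the claim that cardinality-matched packets of classes exist is itself doubtful given that conjugacy class sizes range from $1$ to $n!/n$ and a required cardinality difference may not be expressible as a sum of the available class sizes.

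The spectral step fares no better, and you acknowledge this yourself: Proposition~\ref{prop:w-bound} and Lemma~\ref{lemma:main} control the weights $w_\alpha$ only up to constant factors, which suffices for the $\Theta$-type lower bound of Theorem~\ref{thm:main} but cannot distinguish $A_s$ from a competitor whose boundary is smaller by a $1+o(1)$ factor. A smaller but real inaccuracy is the assertion that partitions with $\alpha_1>n-s$ ``carry strictly smaller eigenvalues'' than $\mu_{(n-s,s)}$: Lemma~\ref{lemma:monotone} and Corollary~\ref{cor:cormu} give a lower bound $\mu_\alpha\geq\mu_{(n-s,s)}$ when $\alpha_1\leq n-s$, but partitions with large first part and many short rows (e.g.\ hooks) are not comparable to $(n-s,s)$ in the dominance order and can still have large eigenvalues, so the clean dichotomy your weight-transfer argument presupposes does not hold. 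In short: the obstacles you identify are the right ones, but they are unresolved, and the proposal does not establish the conjecture.
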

At present, our methods do not seem capable of proving such an exact result.

\section{Appendix}
For completeness, we give here a proof of the bound (\ref{eq:unrestricted-bound}) stated in the Introduction. First, we need a small amount of additional notation. If $i_1,\ldots,i_r \in [n]$ are distinct and $j_1,\ldots,j_r \in [n]$ are distinct, we write
$$R_{i_1 \mapsto j_1,i_2 \mapsto j_2 ,\ldots, i_r \mapsto j_r} := \{\sigma \in S_n:\ \sigma(i_k) = j_k\ \forall k \in [r]\}.$$
If $G = (V,E)$ is a finite graph and $S \subset V$, we write $G[S]$ for the subgraph of $G$ induced on the set of vertices $S$, that is, the graph with vertex-set $S$, where $vw$ is an edge of $G[S]$ if and only $vw$ is an edge of $G$, for each $v,w \in S$. Moreover, if $S,T \subset V$ with $S \cap T = \emptyset$, we write $e(S,T)$ for the number of edges of $G$ between $S$ and $T$.

\begin{proposition}
Let $t \in \{0,1,\ldots,n-1\}$, and let $A \subset S_n$ be an initial segment of the lexicographic ordering on $S_n$, with $(n-t-1)! < |A| \leq (n-t)!$. Then
$$|\partial A| \leq (t+3/2)(n-1)|A|.$$
\end{proposition}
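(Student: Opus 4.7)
My plan is to reduce the general bound to the sub-case $t=0$ via an exact edge-count, and then to handle the $t=0$ case by induction on $n$. Since $|A| \le (n-t)!$ and $A$ is lex-initial, every $\sigma \in A$ lies in $R_{1 \mapsto 1, \ldots, t \mapsto t}$, i.e.\ fixes $[t]$ pointwise. For any transposition $\tau = (a,b)$ with $\{a,b\} \cap [t] \ne \emptyset$, the product $\sigma\tau$ fails to fix $[t]$ and so $\sigma\tau \notin A$; there are exactly $\binom{t}{2} + t(n-t)$ such $\tau$. Under the natural identification $R_{1 \mapsto 1, \ldots, t \mapsto t} \cong S_{n-t}$ (restriction to positions $\{t+1,\ldots,n\}$), which preserves lex order and identifies the $T_n$-induced subgraph with $T_{n-t}$, the remaining boundary edges are exactly $|\partial_{T_{n-t}}(\tilde A)|$, where $\tilde A \subseteq S_{n-t}$ is the lex-initial segment of size $|A|$. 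Hence
\[
|\partial A| = \left[\binom{t}{2} + t(n-t)\right]|A| + |\partial_{T_{n-t}}(\tilde A)|.
\]
Note that $\tilde A$ satisfies $(n-t-1)! < |\tilde A| \le (n-t)!$, precisely the ``$t=0$'' range for $S_{n-t}$. Granting the $t=0$ bound $|\partial_{T_{n-t}}(\tilde A)| \le \tfrac{3}{2}(n-t-1)|A|$, a short calculation gives $\binom{t}{2} + t(n-t) + \tfrac{3}{2}(n-t-1) = (t+\tfrac{3}{2})(n-1) - \tfrac{t(t+2)}{2} \le (t+\tfrac{3}{2})(n-1)$, completing the reduction.

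\textbf{The $t = 0$ case.} We now prove $|\partial A| \le \tfrac{3}{2}(n-1)|A|$ for $A \subseteq S_n$ lex-initial with $(n-1)! < |A| \le n!$ by induction on $n$; the base $n \le 2$ is trivial. Excluding the degenerate case $|A| = n!$, write $|A| = q(n-1)! + r$ with $q \in \{1, \ldots, n-1\}$ and $r \in [0, (n-1)!)$; letting $C_j = \{\sigma \in S_n : \sigma(1) = j\}$, decompose $A = C_1 \cup \cdots \cup C_q \cup A'$ where $A' \subseteq C_{q+1}$ is lex-initial of size $r$. Counting boundary edges by which coset contains each endpoint yields
\[
|\partial A| = q(n-q)(n-1)! + r(n - 2q - 1) + |\partial_{T_{n-1}}(A')|,
\]
where the last term is computed in $C_{q+1} \cong S_{n-1}$ (whose $T_n$-induced subgraph is $T_{n-1}$). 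If $r = 0$ then $|\partial A| = (n-q)|A| \le (n-1)|A|$; if $r > 0$, applying the inductive hypothesis to $A'$ with its appropriate parameter $t'$ (determined by $(n-t'-2)! < r \le (n-t'-1)!$) and substituting into the display above closes the bound in all but one subcase.

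\textbf{Main obstacle.} The only subcase that does not yield immediately to direct induction is $q = 1$ with $r$ close to $(n-1)!$ (so that $A'$ has parameter $t' = 0$ in $S_{n-1}$ and $n$ is large): the inductive bound $|\partial_{T_{n-1}}(A')| \le \tfrac{3}{2}(n-2)r$ is then too crude for the arithmetic to close. I would handle this via the reversal automorphism of $T_{n-1}$, namely the map $\sigma \mapsto \rho\sigma$ with $\rho(k) = n-k$, which is an automorphism of $T_{n-1}$ that exchanges lex-initial and lex-final segments of the same size. Consequently $|\partial_{T_{n-1}}(A')| = |\partial_{T_{n-1}}(B)|$, where $B \subseteq S_{n-1}$ is the lex-initial segment of size $(n-1)! - r$; for $r$ close to $(n-1)!$, $|B|$ is small with large parameter $t''$, and the inductive hypothesis applied to $B$ is much sharper and closes the bound. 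A routine threshold computation then verifies that direct induction on $A'$ handles one subinterval of $r$, the complementary argument on $B$ handles another, and the two ranges overlap to cover all $r \in (0, (n-1)!)$.
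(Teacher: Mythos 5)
Your proposal is correct and follows essentially the same route as the paper: reduce to $t=0$ by counting the edges leaving $R_{1 \mapsto 1,\ldots,t\mapsto t}$, then induct on $n$ by splitting a large initial segment into full cosets $\{\sigma : \sigma(1)=j\}$ plus a lex-initial remainder, and pass to the complement of the remainder inside its coset (via the boundary-preserving reversal symmetry) when the remainder is large. The only organizational difference is that the paper's inductive step for the $t=0$ case is self-contained --- it applies the $t=0$ hypothesis to whichever of the remainder and its in-coset complement is smaller, noting that this set has size at most $|A|/3$ so the arithmetic is immediate --- whereas you run a mutual induction with the general-$t$ statement and close the hard $q=1$ subcase by an explicit threshold computation, which does check out (the cutoff $r=\tfrac12(n-1)!$ works).
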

\begin{proof}
The case of general $t$ will follow from the case $t=0$, which we deal with in the following claim.
\begin{claim}
\label{claim:large}
Let $A \subset S_n$ be an initial segment of the lexicographic ordering on $S_n$ with $|A| > (n-1)!$. Then $|\partial A| \leq \tfrac{3}{2} (n-1)|A|$.
\end{claim}
\begin{proof}[Proof of claim.]
By induction on $n$. The claim holds trivially for $n \leq 2$. Let $n \geq 3$, and assume the claim holds for $n-1$. Let $A \subset S_n$ be an initial segment of the lexicographic ordering on $S_n$ with $|A| > (n-1)!$. Then we may write
$$A = R_{1 \mapsto 1} \cup R_{1 \mapsto 2} \cup \ldots \cup R_{1 \mapsto j-1} \cup A_1,$$
where $A_1 \subset R_{1 \mapsto j}$ and $j \in \{2,3,\ldots,n\}$. Define
$$A_0 = R_{1 \mapsto 1} \cup R_{1 \mapsto 2} \cup \ldots \cup R_{1 \mapsto j-1};$$
then $A = A_0 \dot \cup A_1$. Define
$$C = R_{1 \mapsto j+1} \cup \ldots \cup R_{1 \mapsto n}.$$
Notice that
$$|\partial A| = e(A,C) + e(A_0,R_{1 \mapsto j} \setminus A_1) + e(A_1, R_{1 \mapsto j} \setminus A_1).$$
Observe that
$$e(A,C) + e(A_0,R_{1 \mapsto j} \setminus A_1) \leq (n-1)|A|,$$
since each edge on the left-hand side is between $T_{1 \mapsto i}$ and $T_{1 \mapsto i'}$ for some $i \neq i'$, and there are at most $n-1$ such edges of $T_n$ incident with any permutation. To complete the proof of the inductive step, it suffices to show that
$$e(A_1, R_{1 \mapsto j} \setminus A_1) \leq \tfrac{1}{2}(n-1)|A|.$$
In fact, we prove the slightly stronger bound
$$e(A_1, R_{1 \mapsto j} \setminus A_1) \leq \tfrac{1}{2}(n-2)|A|,$$
by splitting into two cases.\newline

Case (i): $\min\{|A_1|,|R_{1 \mapsto j} \setminus A_1|\} \leq (n-2)!$.

Notice that $e(A_1,R_{1 \mapsto j} \setminus A_1)$ is simply the size of the edge boundary of $A_1$ in the graph $T_n[R_{1 \mapsto j}]$, which is isomorphic to the transposition graph $T_{n-1}$. This graph is ${n-1 \choose 2}$-regular, so trivially,
$$e(A_1,R_{1 \mapsto j} \setminus A_1) \leq {n-1 \choose 2} \min\{|A_1|,|T_{1\mapsto j} \setminus A_1|\}.$$
Since $\min\{|A_1|,|R_{1 \mapsto j} \setminus A_1|\} \leq (n-2)! < |A|/(n-1)$, we have
$$e(A_1,R_{1 \mapsto j} \setminus A_1) \leq \tfrac{1}{2}(n-2)|A|.$$
This completes the inductive step in case (i).\newline

Case (ii): $\min\{|A_1|,|R_{1 \mapsto j} \setminus A_1|\} > (n-2)!$.

Define
$$B = \begin{cases} A_1 & \mbox{if } |A_1| \leq \tfrac{1}{2}(n-1)!;\\ R_{1 \mapsto j} \setminus A_1 & \mbox{otherwise.}\end{cases}$$
Then $|B| > (n-2)!$, and $e(B,R_{1 \mapsto j} \setminus B)$ is the size of the edge-boundary of $B$ in the graph $T_n[R_{1 \mapsto j}]$. Hence, by the induction hypothesis, we have
$$e(B,R_{1 \mapsto j} \setminus B) \leq \tfrac{3}{2}(n-2)|B|.$$
Since $|B| \leq \tfrac{1}{3}|A|$, we have
$$e(B,R_{1 \mapsto j} \setminus B) \leq \tfrac{1}{2}(n-2)|A|.$$
Hence,
$$e(A_1,R_{1 \mapsto j} \setminus A_1) \leq \tfrac{1}{2}(n-2)|A|.$$
This completes the inductive step in case (ii), proving the claim.
\end{proof}
We can now prove the proposition for $t \in [n-1]$. Let $t \in [n-1]$ and let $A \subset S_n$ be an initial segment of the lexicographic ordering on $S_n$ with $(n-t-1)! < |A| \leq (n-t)!$. Then $A \subset R_{1 \mapsto 1,\ldots,t \mapsto t}$. Hence,
\begin{equation} \label{eq:boundary-sum} |\partial A| = e(A,R_{1 \mapsto 1,\ldots,t \mapsto t} \setminus A) + e(A, S_n \setminus R_{1 \mapsto 1,\ldots, t \mapsto t}).\end{equation}
Observe that
\begin{equation}\label{eq:bound-outside} e(A, S_n \setminus R_{1 \mapsto 1,\ldots, t \mapsto t}) \leq t(n-1)|A|,\end{equation}
since each $\sigma \in R_{1 \mapsto 1,\ldots, t \mapsto t}$ has exactly $t(n-1)$ neighbours in $S_n \setminus R_{1 \mapsto 1,\ldots, t \mapsto t}$. Moreover, $e(A,R_{1 \mapsto 1,\ldots,t \mapsto t} \setminus A)$ is simply the size of the edge boundary of $A$ in the graph $T_n[R_{1 \mapsto 1, \ldots,t \mapsto t}]$, which is isomorphic to the transposition graph on $S_{n-t}$. Hence, by Claim \ref{claim:large},
\begin{equation} \label{eq:bound-inside} e(A,R_{1 \mapsto 1,\ldots,t \mapsto t} \setminus A) \leq \tfrac{3}{2} (n-t-1) |A|.\end{equation}
Plugging (\ref{eq:bound-outside}) and (\ref{eq:bound-inside}) into (\ref{eq:boundary-sum}) gives
$$|\partial A| \leq t(n-1)|A| + \tfrac{3}{2} (n-t-1) |A| \leq (t+3/2)(n-1) |A|.$$
This completes the proof of the proposition.
\end{proof}

\subsubsection*{Ackowledgement}
We would like to thank Itai Benjamini for several helpful discussions, and an anonymous referee for several helpful suggestions.

\end{document}